\numberwithin{equation}{section}
\def\today{\number\day\space\ifcase\month\or   January\or February\or
   March\or April\or May\or June\or   July\or August\or September\or
   October\or November\or December\fi\   \number\year}
\newtheorem{lma}{Lemma}
\newtheorem{thm}{Theorem}
\newtheorem{cor}{Corollary}
\newtheorem{prp}{Proposition}
\theoremstyle{definition}
\newtheorem{pgr}{}
\newtheorem{dfn}{Definition}
\newtheorem{rmk}{Remark}
\newcommand{\Cs}{{$C^*$-algebra}}
\DeclareMathOperator{\locdim}{locdim}
\DeclareMathOperator{\ord}{ord}
\DeclareMathOperator{\sr}{sr}
\DeclareMathOperator{\csr}{csr}
\DeclareMathOperator{\rr}{rr}
\DeclareMathOperator{\topdim}{topdim}
\DeclareMathOperator{\Prim}{Prim}
\newcommand{\termdef}[1]{\emph{#1}\index{#1}}
\begin{document}

\title{The topological dimension of type I $C^*$-algebras}
\author{Hannes Thiel}
\address{Department of Mathematical Sciences, University of Copenhagen, Universitetsparken 5, DK-2100, Copenhagen \O, Denmark}
\email{thiel@math.ku.dk}

\thanks{
This research was supported by the Danish National Research Foundation through the Centre for Symmetry and Deformation.
}

\subjclass[2010]%
{Primary
46L05, % General theory of C*-algebras
46L85; % Noncommutative topology
Secondary
54F45, % Dimension theory
55M10. % Dimension theory
%46M20, % Methods of algebraic topology in functional analysis
}

\keywords{$C^*$-algebras, dimension theory, stable rank, real rank, topological dimension, type I $C^*$-algebras}

\date{\today}

%##########################################################################################
\begin{abstract}
    While there is only one natural dimension concept for separable, metric spaces, the theory of dimension in noncommutative topology ramifies into different important concepts.
    To accommodate this, we introduce the abstract notion of a noncommutative dimension theory by proposing a natural set of axioms.
    These axioms are inspired by properties of commutative dimension theory, and they are for instance satisfied by the real and stable rank, the decomposition rank and the nuclear dimension.

    We add another theory to this list by showing that the topological dimension, as introduced by Brown and Pedersen, is a noncommutative dimension theory of type I $C^*$-algebras.
    We also give estimates of the real and stable rank of a type I $C^*$-algebra in terms of its topological dimension.
\end{abstract}

\maketitle

%##########################################################################################
%##########################################################################################
\section{Introduction}

%##########################################################################################
\noindent
    The covering dimension of a topological space is a natural concept that extends our intuitive understanding that a point is zero-dimensional, a line is one-dimensional etc.
    While there also exist other dimension theories for topological spaces (e.g., small and large inductive dimension), they all agree for separable, metric spaces.

    This is in contrast to noncommutative topology where the concept of dimension ramifies into different important theories, such as the real and stable rank, the decomposition rank and the nuclear dimension.
    Each of these concepts has been studied in its own right, and they have applications in many different areas.
%    For instance, a low stable rank simplifies the computation of $K$-theory, and real rank zero allows to recover the trace simplex of a $C^*$-algebra from its $K_0$-theory.
    A low dimension in each of these theories can be considered as a regularity property, and such regularity properties play an important role in the classification program of $C^*$-algebras, see \cite{Ror2006}, \cite{EllTom2008}, \cite{Win2012} and the references therein.

    In \autoref{HTsec:NCDimThy} of this paper we introduce the abstract notion of a noncommutative dimension theory as an assignment $d\colon\mathcal{C}\to\overline{\mathbb{N}}$ from a class of $C^*$-algebras to the extended natural numbers $\overline{\mathbb{N}}=\{0,1,2,\ldots,\infty\}$ satisfying a natural set of axioms, see \autoref{HTdfn:NCDimThy:ncDimThy}.
    These axioms are inspired by properties of the theory of covering dimension, see \autoref{HTpgr:NCDimThy:motivation}, and they hold for the theories mentioned above.
    Thus, the proposed axioms do not define a unique dimension theory of $C^*$-algebras, but rather they collect the essential properties that such theories (should) satisfy.

    Besides the very plausible axioms (D1)-(D4), we also propose (D5) which means that the property of being at most $n$-dimensional is preserved under approximation by sub-$C^*$-algebras, see \ref{HTpgr:prelim:approximation}.
    This is the noncommutative analog of the notion of ``likeness'', see \ref{HTpgr:prelim:P-like} and \cite[3.1 - 3.3]{Thi2011}.
    This axiom implies that dimension does not increase when passing to the limit of an inductive system of $C^*$-algebras, i.e., $d(\varinjlim A_i)\leq\liminf d(A_i)$, see \autoref{HTprp:NCDimThy:dim_limit}.

    Finally, axiom (D6) says that every \emph{separable} sub-$C^*$-algebra $C\subset A$ is contained in a \emph{separable} sub-$C^*$-algebra $D\subset A$ such that $d(D)\leq d(A)$.
    This is the noncommutative analog of Marde\v{s}i\'{c}'s factorization theorem, which says that every map $f\colon X\to Y$ from a compact space $X$ to a compact, \emph{metrizable} space $Y$ can be factorized through a compact, \emph{metrizable} space $Z$ with $\dim(Z)\leq\dim(X)$, see \autoref{HTpgr:NCDimThy:motivation} and \cite[Corollary 27.5, p.159]{Nag1970} or \cite[Lemma 4]{Mar1960}.

    In \autoref{HTsec:topdim} we show that the topological dimension as introduced by Brown and Pedersen, \cite{BroPed2009}, is a dimension theory in the sense of \autoref{HTdfn:NCDimThy:ncDimThy} for the class of type I $C^*$-algebras.
    The idea of the topological dimension is to simply consider the dimension of the primitive ideal space of a $C^*$-algebra.
    This will, however, run into problems if the primitive ideal space is not Hausdorff.
    One therefore has to restrict to (locally closed) Hausdorff subsets, and taking the supremum over the dimension of these Hausdorff subsets defines the topological dimension, see \autoref{HTdfn:topdim:topdim}.

    In \autoref{HTsec:typeI} we show how to estimate the real and stable rank of a type I $C^*$-algebra in terms of its topological dimension.
    \\

    \autoref{HTsec:typeI} of this article is based on the diploma thesis of the author, \cite{Thi2009}, which was written under the supervision of Wilhelm Winter at the University of M\"unster in 2009.
    Sections \ref{HTsec:NCDimThy} and \ref{HTsec:topdim} are based upon unpublished notes by the author for the masterclass ``The nuclear dimension of $C^*$-algebras'', held at the University of Copenhagen in November 2011.

%##########################################################################################
%##########################################################################################
%##########################################################################################
\section{Preliminaries}
\label{HTsec:Prelim}

%------------------------------------------------------------------------------------------
\noindent
    We denote by $\mathcal{C}^{*}$ the category of $C^*$-algebras with ${}^*$-homomorphism as morphisms.
    In general, by a morphism between $C^*$-algebras we mean a ${}^*$-homomorphism.

    We write $J\lhd A$ to indicate that $J$ is an ideal in $A$,
    and by an ideal of a $C^*$-algebra we understand a closed, two-sided ideal.
    Given a $C^*$-algebra $A$, we denote by $A_+$ the set of positive elements.
    We denote the minimal unitization of $A$ by $\widetilde{A}$.
    The primitive ideal space of $A$ will be denoted by $\Prim(A)$, and the spectrum by $\widehat{A}$.
    We refer the reader to Blackadar's book, \cite{Bla2006}, for details on the theory of $C^*$-algebras.

    If $F,G\subset A$ are two subsets of a $C^*$-algebra, and $\varepsilon>0$, then we write $F\subset_\varepsilon G$ if for every $x\in F$ there exists some $y\in G$ such that $\|x-y\|<\varepsilon$.
    Given elements $a,b$ in a $C^*$-algebra, we write $a=_\varepsilon b$ if $\|a-b\|<\varepsilon$.
    Given $a,b\in A_+$, we write $a\ll b$ if $b$ acts as a unit for $a$, i.e., $ab=a$, and we write $a\ll_\varepsilon b$ if $ab=_\varepsilon a$.

    We denote by $M_k$ the $C^*$-algebra of $k$-by-$k$ matrices, and by $\mathbb{K}$ the $C^*$-algebra of compact operators on an infinite-dimensional, separable Hilbert space.
    We denote by $\overline{\mathbb{N}}=\{0,1,2,\ldots,\infty\}$ the extended natural numbers.

%------------------------------------------------------------------------------------------
\begin{pgr}
\label{HTpgr:prelim:pointed_spaces}
    As pointed out in \cite[II.2.2.7, p.61]{Bla2006}, the full subcategory of commutative $C^*$-algebras is dually equivalent to the category $\mathcal{SP}_{*}$ whose objects are pointed, compact Hausdorff spaces and whose morphisms are pointed, continuous maps.

    For a locally compact, Hausdorff space $X$, let $\alpha X$ be its one-point compactification.
    Let $X^+$ be the space with one additional point $x_\infty$ attached, i.e., $X^+=X\sqcup \{x_\infty\}$ if $X$ is compact, and $X^+=\alpha X$ if $X$ is not compact.
    In both cases, the basepoint of $X^+$ is the attached point $x_\infty$.
%    For a space $(X,x_\infty)\in\mathcal{SP}_{*}$ define:
%    \begin{align}
%        C_0(X,x_\infty)
%            &:=\{f\colon X\to\mathbb{C}\: |\: f \text{ is continuous }, f(x_\infty)=0\}.
%    \end{align}
%
%    Then, the dual equivalence between $\CatCommut$ and $\mathcal{SP}_{*}$ is given by the (contravariant) functors assigning to $A$ the spaces $(\Prim(A)^+,x_\infty)$, and conversely assigning to $(X,x_\infty)$ the $C^*$-algebra $C_0(X,x_\infty)$.
%    Thus, the study of dimension theories of commutative $C^*$-algebras is equivalent to the study of dimension theories of pointed, compact Hausdorff spaces.
\end{pgr}

%------------------------------------------------------------------------------------------
\begin{pgr}
\label{HTpgr:prelim:covering_dimension}
    Let $X$ be a space, and let $\mathcal{U}$ be a cover of $X$.
    The \termdef{order} of $\mathcal{U}$, denoted by $\ord(\mathcal{U})$, is the largest integer $k$ such that some point $x\in X$ is contained in $k$ different elements of $\mathcal{U}$ (and $\ord(\mathcal{U})=\infty$ if no such $k$ exists).
    The \termdef{covering dimension} of $X$, denoted by $\dim(X)$, is the smallest integer $n\geq 0$ such that every finite, open cover of $X$ can be refined by a finite, open cover that has order at most $n+1$ (and $\dim(X)=\infty$ if no such $n$ exists).
    We refer the reader to chapter $2$ of Nagami's book \cite{Nag1970} for more details.

    It was pointed out by Morita, \cite{Mor1975}, that in general this definition of covering dimension should be modified to consider only \emph{normal}, finite, open covers.
    However, for normal spaces (e.g. compact spaces) every finite, open cover is normal, so that we may use the original definition.

    The \termdef{local covering dimension} of $X$, denoted by $\locdim(X)$, is the smallest integer $n\geq 0$ such that every point $x\in X$ is contained in a closed neighborhood $F$ such that $\dim(F)\leq n$ (and $\locdim(X)=\infty$ if no such $n$ exists).
    We refer the reader to \cite{Dow1955} and \cite[Chapter 5]{Pea1975} for more information about the local covering dimension.

    It was noted by Brown and Pedersen, \cite[Section 2.2 (ii)]{BroPed2009}, that $\locdim(X)=\dim(\alpha X)$ for a locally compact, Hausdorff space $X$.
    We propose that the natural dimension of a pointed space $(X,x_\infty)\in\mathcal{SP}_{*}$ is $\dim(X)=\locdim(X\setminus\{x_\infty\})$.
    Then, for a commutative $C^*$-algebra $A$, the natural dimension is $\locdim(\Prim(A))$.

    If $G\subset X$ is an open subset of a locally compact space, then $\locdim(G)\leq\locdim(X)$, see \cite[4.1]{Dow1955}.
    It was also shown by Dowker that this does not hold for the usual covering dimension (of non-normal spaces).
\end{pgr}

%------------------------------------------------------------------------------------------
\begin{pgr}
\label{HTpgr:prelim:approximation}
\index{Approximation by subalgebras}
    A family of sub-$C^*$-algebras $A_i\subset A$ is said to \termdef{approximate} a $C^*$-algebra $A$ (in the literature there also appears the formulation that the $A_i$ ``locally approximate'' $A$), if for every finite subset $F\subset A$, and every $\varepsilon>0$, there exists some $i$ such that $F\subset_\varepsilon A_i$.
    Let us mention some facts about approximation by subalgebras:
    \begin{enumerate}
        \item
        If $A_1\subset A_2\subset \ldots \subset A$
        is an increasing sequence of sub-$C^*$-algebras with $A=\overline{\bigcup_k A_k}$,
        then $A$ is approximated by the family $\{A_k\}$.
        \item
        If $A$ is approximated by a family $\{A_i\}$, and $J\lhd A$ is an ideal, then $J$ is approximated by the family $\{A_i\cap J\}$.
        In particular, if $A=\overline{\bigcup_k A_k}$, then $J=\overline{\bigcup_k (A_k\cap J)}$.

        Similarly, $A/J$ is approximated by the family $\{A_i/(A_i\cap J)\}$.
        \item
        If $A$ is approximated by a family $\{A_i\}$, and $B\subset A$ is a hereditary sub-$C^*$-algebra, then $B$ might \emph{not} be approximated by the family $\{A_i\cap B\}$.
        Nevertheless, $B$ is approximated by algebras that are isomorphic to hereditary sub-$C^*$-algebras of the algebras $A_i$, see \autoref{HTprp:NCDimThy:ApproxHer}.
    \end{enumerate}
\end{pgr}

%------------------------------------------------------------------------------------------
\begin{pgr}
\label{HTpgr:prelim:P-like}
\index{P-like}
    Let $\mathcal{P}$ be some property of $C^*$-algebras.
    We say that a $C^*$-algebra $A$ is \termdef{$\mathcal{P}$-like} (in the literature there also appears the formulation $A$ is ``locally $\mathcal{P}$'') if $A$ is approximated by subalgebras with property $\mathcal{P}$, see \cite[3.1 - 3.3]{Thi2011}.
    This is motivated by the concept of $\mathcal{P}$-likeness for commutative spaces, as defined in \cite[Definition 1]{MarSeg1963} and further developed in \cite{MarMat1992}.

    We will work in the category $\mathcal{SP}_{*}$ of pointed, compact spaces, see \ref{HTpgr:prelim:pointed_spaces}.
    Let $\mathcal{P}$ be a non-empty class of spaces.
    Then, a space $X\in\mathcal{SP}_{*}$ is said to be $\mathcal{P}$-like if for every finite, open cover $\mathcal{U}$ of $X$ there exists a (pointed) map $f\colon X\to Y$ onto some space $Y\in\mathcal{P}$ and a finite, open cover $\mathcal{V}$ of $Y$ such that $\mathcal{U}$ is refined by $f^{-1}(\mathcal{V})=\{f^{-1}(V)\: |\: V\in\mathcal{V}\}$.

    Note that we have used $\mathcal{P}$ to denote both a class of spaces and a property that spaces might enjoy.
    These are just different viewpoints, as we can naturally assign to a property the class of spaces with that property, and vice versa to each class of spaces the property of lying in that class.

    For commutative $C^*$-algebras, the notion of $\mathcal{P}$-likeness for $C^*$-algebras coincides with that for spaces.
    More precisely, it is shown in \cite[Proposition 3.4]{Thi2011} that for a space $(X,x_\infty)\in\mathcal{SP}_{*}$ and a collection $\mathcal{P}\subset\mathcal{SP}_{*}$, the following are equivalent:
    \begin{enumerate}[(a)  ]
        \item
        $(X,x_\infty)$ is $\mathcal{P}$-like,
        \item
        $C_0(X\setminus\{x_\infty\})$ is approximated by sub-$C^*$-algebras $C_0(Y\setminus\{y_\infty\})$ with $(Y,y_\infty)\in\mathcal{P}$.
    \end{enumerate}

    We note that the definition of covering dimension can be rephrased as follows.
    Let $\mathcal{P}_k$ be the collection of all $k$-dimensional polyhedra (polyhedra are defined by combinatoric data, and their dimension is defined by this combinatoric data).
    Then a compact space $X$ satisfies $\dim(X)\leq k$ if and only if it is $\mathcal{P}_k$-like.
    This motivates (D5) in \autoref{HTdfn:NCDimThy:ncDimThy} below.
\end{pgr}

%--------------------------------------------------------------------------------
\begin{pgr}
\label{HTpgr:prelim:CT}
    For the definition of continuous trace $C^*$-algebras we refer to \cite[Definition IV.1.4.12, p.333]{Bla2006}.
    It is known that a $C^*$-algebra $A$ has continuous trace if and only if its spectrum $\widehat{A}$ is Hausdorff and it satisfies Fell's condition, i.e., for every $\pi\in\widehat{A}$ there exists a neighborhood $U\subset\widehat{A}$ of $\pi$ and some $a\in A_+$ such that $\rho(a)$ is a rank-one projection for each $\rho\in U$, see
    \cite[Proposition IV.1.4.18, p.335]{Bla2006}.
\end{pgr}

%--------------------------------------------------------------------------------
\begin{pgr}
\label{HTpgr:prelim:typeI}
    A $C^*$-algebra $A$ is called a \termdef{CCR algebra} (sometimes called a liminal algebra) if for each of its irreducible representations $\pi\colon A\to B(H)$ we have that $\pi$ takes values inside the compact operators $K(H)$.

    A \termdef{composition series} for a $C^*$-algebra $A$ is a collection of ideals $J_\alpha\lhd A$, indexed over all ordinal numbers $\alpha\leq\mu$ for some $\mu$, such that $A=J_\mu$ and:
    \begin{enumerate}[(i)   ]
        \item
        if $\alpha\leq\beta$, then $J_\alpha\subset J_\beta$,
        \item
        if $\alpha$ is a limit ordinal, then $J_\alpha=\overline{\bigcup_{\gamma<\alpha}J_\gamma}$.
    \end{enumerate}
    The $C^*$-algebras $J_{\alpha+1}/J_\alpha$ are called the successive quotients of the composition series.

    A $C^*$-algebra is called a \termdef{type I algebra} (sometimes also called postliminal) if it has a composition series with successive quotients that are CCR algebras.
    As it turns out, this is equivalent to having a composition series whose successive quotients have continuous trace.

    For information about type I $C^*$-algebras and their rich structure we refer the reader to Chapter IV.1 of Blackadar's book, \cite{Bla2006}, and Chapter 6 of Pedersen's book, \cite{Ped1979}.
\end{pgr}

%##########################################################################################
%##########################################################################################
%##########################################################################################
\section{Dimension theories for $C^*$-algebras}
\label{HTsec:NCDimThy}

%------------------------------------------------------------------------------------------
\noindent
    In this section, we introduce the notion of a noncommutative dimension theory by proposing a natural set of axioms that such theories should satisfy.
    These axioms hold for many well-known theories, in particular the real and stable rank, the decomposition rank and the nuclear dimension, see \autoref{HTprp:NCDimThy:axiomatic_dimThy},
    and this will also be discussed more thoroughly in a forthcoming paper.
    In \autoref{HTsec:topdim} we will show that the topological dimension is a dimension theory for type I $C^*$-algebras.

    Our axioms of a noncommutative dimension theory are inspired by facts that the theory of covering dimension satisfies, see \autoref{HTpgr:NCDimThy:motivation}.

    In \autoref{HTdfn:NCDimThy:Morita-inv} we introduce the notion of Morita-invariance for dimension theories.
    If a dimension theory is only defined on a subclass of $C^*$-algebras, then there is a natural extension of the theory to all $C^*$-algebras, see \autoref{HTprp:NCDimThy:DimThy_extension_by_approx}.
    We will show that this extension preserves Morita-invariance.
    \\

    We denote by $\mathcal{C}^{*}$ the category of $C^*$-algebras, and we will use $\mathcal{C}$ to denote a class of $C^*$-algebras.
    We may think of $\mathcal{C}$ as a full subcategory of $\mathcal{C}^{*}$.

%------------------------------------------------------------------------------------------
\begin{dfn}
\label{HTdfn:NCDimThy:ncDimThy}
\index{Dimension theory}
    Let $\mathcal{C}$ be a class of $C^*$-algebras that is closed under ${}^*$-isomorphisms, and closed under taking ideals, quotients, finite direct sums, and minimal unitizations.
    A \termdef{dimension theory} for $\mathcal{C}$ is an assignment $d\colon\mathcal{C}\to\overline{\mathbb{N}}=\{0,1,2,\ldots,\infty\}$ such that $d(A)=d(A')$ whenever $A,A'$ are isomorphic $C^*$-algebras in $\mathcal{C}$, and moreover the following axioms are satisfied:
    \begin{enumerate}[(D1)   ]
        \item
        $d(J)\leq d(A)$ whenever $J\lhd A$ is an ideal in $A\in\mathcal{C}$,
        \item
        $d(A/J)\leq d(A)$ whenever $J\lhd A\in\mathcal{C}$,
        \item
        $d(A\oplus B)=\max\{d(A),d(B)\}$, whenever $A,B\in\mathcal{C}$,
        \item
        $d(\widetilde{A})=d(A)$, whenever $A\in\mathcal{C}$.
        \item
        If $A\in\mathcal{C}$ is approximated by subalgebras $A_i\in\mathcal{C}$ with $d(A_i)\leq n$, then $d(A)\leq n$.
        \item
        Given $A\in\mathcal{C}$ and a separable sub-$C^*$-algebra $C\subset A$, there exists a separable $C^*$-algebra $D\in\mathcal{C}$ such that $C\subset D\subset A$ and $d(D)\leq d(A)$.
    \end{enumerate}
    Note that we do not assume that $\mathcal{C}$ is closed under approximation by sub-$C^*$-algebra, so that the assumption $A\in\mathcal{C}$ in (D5) is necessary.
    Moreover, in axiom (D6), we do not assume that the separable subalgebra $C$ lies in $\mathcal{C}$.
\end{dfn}

%------------------------------------------------------------------------------------------
\begin{rmk}
\label{HTpgr:NCDimThy:motivation}
    The axioms in \autoref{HTdfn:NCDimThy:ncDimThy} are inspired by well-known facts of the local covering dimension of commutative spaces, see \ref{HTpgr:prelim:covering_dimension}.

    Axiom (D1) and (D2) generalize the fact that the local covering dimension does not increase when passing to an open (resp. closed) subspace, see \cite[4.1, 3.1]{Dow1955}, and axiom (D3) generalizes the fact that $\locdim(X\sqcup Y)=\max\{\locdim(X),\locdim(Y)\}$.
    Axiom (D4) generalizes that $\locdim(X)=\locdim(\alpha X)$, where $\alpha X$ is the one-point compactification of $X$.

    Axiom (D5) generalizes the fact that a (compact) space is $n$-dimensional if it is $\mathcal{P}_n$-like for the class $\mathcal{P}_n$ of $n$-dimensional spaces, see \ref{HTpgr:prelim:P-like}.
    Note also that \autoref{HTprp:NCDimThy:dim_limit} generalizes the fact that $\dim(\varprojlim X_i)\leq\liminf_i \dim(X_i)$ for an inverse system of compact spaces $X_i$.

    Axiom (D6) is a generalization of the following factorization theorem, due to Mard\-e\v{s}i\'c, see \cite[Corollary 27.5, p.159]{Nag1970} or \cite[Lemma 4]{Mar1960}:
    Given a compact space $X$ and a map $f\colon X\to Y$ to a compact, metrizable space $Y$, there exists a compact, metrizable space $Z$ and maps $g\colon X\to Z, h\colon Z\to Y$ such that $g$ is onto, $\dim(Z)\leq\dim(X)$ and $f=h\circ g$.
    This generalizes (D6), since a unital, commutative $C^*$-algebra $C(X)$ is separable if and only if $X$ is metrizable.

    Axioms (D5) and (D6) are also related to the following concept which is due to Blackadar, \cite[Definition II.8.5.1, p.176]{Bla2006}:
    A property $\mathcal{P}$ of $C^*$-algebras is called \termdef{separably inheritable} if:
    \begin{enumerate}
        \item
        For every $C^*$-algebra $A$ with property $\mathcal{P}$ and separable sub-$C^*$-algebra $C\subset A$, there exists a separable sub-$C^*$-algebra $D\subset A$ that contains $C$ and has property $\mathcal{P}$.
        \item
        Given an inductive system $(A_k,\varphi_k)$ of separable $C^*$-algebras with injective connecting morphisms $\varphi_k\colon A_k\to A_{k+1}$,
        if each $A_k$ has property $\mathcal{P}$, then does the inductive limit $\varinjlim A_k$.
    \end{enumerate}

    Thus, for a dimension theory $d$, the property ``$d(A)\leq n$'' is separably inheritable.

    Axioms (D5) and (D6) imply that $d(A)\leq n$ if and only if $A$ can be written as an inductive limit (with injective connecting morphisms) of separable $C^*$-algebras $B$ with $d(B)\leq n$.
    This allows us to reduce essentially every question about dimension theories to the case of separable $C^*$-algebras.

    By explaining the analogs of (D1)-(D6) for pointed, compact spaces, we have shown the following:
\end{rmk}

%------------------------------------------------------------------------------------------
\begin{prp}
\label{HTprp:NCDimThy:commutative}
    Let $\mathcal{C}^{*}_{\mathrm{ab}}$ denote the class of commutative $C^*$-algebras.
    Then, the assignment $d\colon\mathcal{C}^{*}_{\mathrm{ab}}\to\overline{\mathbb{N}}$, $d(A):=\locdim(\Prim(A))$, is a dimension theory.
\end{prp}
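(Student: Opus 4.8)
The plan is to translate every requirement of \autoref{HTdfn:NCDimThy:ncDimThy} into the language of locally compact Hausdorff spaces via Gelfand duality and to feed in the classical facts about $\locdim$ collected in \ref{HTpgr:prelim:covering_dimension} and \ref{HTpgr:prelim:pointed_spaces}. Write $A = C_0(X)$, so that $\Prim(A) = X$ and $d(A) = \locdim(X)$; equivalently, in the pointed picture, $A$ corresponds to $(X^+,x_\infty)$ and $d(A) = \locdim(X^+\setminus\{x_\infty\}) = \dim(X^+)$, using $X^+ = \alpha X$ together with the identity $\locdim(X) = \dim(\alpha X)$ of Brown--Pedersen. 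That $\mathcal{C}^{*}_{\mathrm{ab}}$ is closed under ${}^*$-isomorphisms, ideals ($C_0(U)$ for $U$ open), quotients ($C_0(F)$ for $F$ closed), finite direct sums ($C_0(X\sqcup Y)$) and minimal unitizations ($C(X^+)$) is immediate, and $d(A)=d(A')$ for isomorphic $A,A'$ holds since $\Prim(-)$ is functorial and $\locdim$ is a topological invariant.

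For (D1) and (D2): an ideal of $C_0(X)$ is $C_0(U)$ with $U\subseteq X$ open, and the corresponding quotient is $C_0(X\setminus U)$ with $X\setminus U$ closed, so these axioms assert exactly that $\locdim$ does not increase on open, respectively closed, subspaces --- Dowker's results \cite[4.1]{Dow1955} and \cite[3.1]{Dow1955}. (This is precisely the point at which one must use $\locdim$ and not $\dim$, since monotonicity on open subsets fails for $\dim$.) For (D3): $\locdim(X\sqcup Y)\geq\max\{\locdim(X),\locdim(Y)\}$ because $X$ and $Y$ are clopen in $X\sqcup Y$ (apply (D1)), while for the reverse inequality any point of $X\sqcup Y$ lies in one of the two clopen summands, and a closed neighbourhood realising its local dimension there is still closed in $X\sqcup Y$. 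For (D4): $\widetilde{C_0(X)} = C(X^+) = C_0(X^+)$ has primitive ideal space $X^+ = \alpha X$, which is compact, so $\locdim(\alpha X) = \dim(\alpha(\alpha X)) = \dim(\alpha X\sqcup\{\mathrm{pt}\}) = \dim(\alpha X) = \locdim(X)$, using Brown--Pedersen twice and $\locdim(Z\sqcup\{\mathrm{pt}\}) = \locdim(Z)$.

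The two substantive axioms are (D5) and (D6). For (D5), suppose $A = C_0(X)$ is approximated by subalgebras $A_i\in\mathcal{C}^{*}_{\mathrm{ab}}$ (automatically commutative, being subalgebras of a commutative algebra) with $d(A_i)\leq n$, say $A_i = C_0(W_i)$ with $\locdim(W_i)\leq n$, i.e. $\dim(W_i^+)\leq n$. By the equivalence (a)$\Leftrightarrow$(b) of \cite[Proposition 3.4]{Thi2011} recalled in \ref{HTpgr:prelim:P-like}, the pointed space $(X^+,x_\infty)$ is $\mathcal{P}$-like for $\mathcal{P} = \{(W_i^+,w_\infty^i)\}$. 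I then show that $\mathcal{P}$-likeness over a class of compact spaces of covering dimension $\leq n$ forces $\dim(X^+)\leq n$: given a finite open cover $\mathcal{U}$ of $X^+$, choose a pointed onto map $f\colon X^+\to W_i^+$ and a finite open cover $\mathcal{V}$ of $W_i^+$ with $f^{-1}(\mathcal{V})$ refining $\mathcal{U}$; refine $\mathcal{V}$ to a finite open cover $\mathcal{V}'$ with $\ord(\mathcal{V}')\leq n+1$ (possible as $\dim(W_i^+)\leq n$); then $f^{-1}(\mathcal{V}')$ is a finite open cover of $X^+$ refining $\mathcal{U}$ with $\ord(f^{-1}(\mathcal{V}'))\leq\ord(\mathcal{V}')\leq n+1$. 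Hence $d(A) = \locdim(X) = \dim(X^+)\leq n$.

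For (D6), let $C\subseteq A = C_0(X)$ be separable, say $C = C_0(Z)$ with $Z$ second countable, so $Z^+$ is compact and metrizable; the inclusion dualizes to a pointed onto map $f\colon X^+\to Z^+$. Apply Marde\v{s}i\'c's factorization theorem (quoted in \autoref{HTpgr:NCDimThy:motivation}, cf. \cite[Corollary 27.5, p.159]{Nag1970} or \cite[Lemma 4]{Mar1960}) to $f$: there is a compact metrizable space $Y$ and maps $g\colon X^+\to Y$ onto, $h\colon Y\to Z^+$ with $\dim(Y)\leq\dim(X^+)$ and $f = h\circ g$. Put $y_\infty := g(x_\infty)$, so $g$ and $h$ become pointed maps $(X^+,x_\infty)\to(Y,y_\infty)\to(Z^+,z_\infty)$, and set $D := C_0(Y\setminus\{y_\infty\})$; then $D$ is commutative and separable, the pullbacks of $g$ and $h$ realise $C\subseteq D\subseteq A$ with composite the original inclusion (here $h$ is onto since $f$ is), and $d(D) = \locdim(Y\setminus\{y_\infty\}) = \dim(Y)\leq\dim(X^+) = \locdim(X) = d(A)$. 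I expect the main obstacle to be (D6): while the basepoint bookkeeping and the empty-space/isolated-point edge cases are routine, it rests squarely on Marde\v{s}i\'c's factorization theorem, which is by far the deepest classical input; (D5) is the next most involved step, requiring the translation through $\mathcal{P}$-likeness, whereas (D1)--(D4) are immediate once Dowker's monotonicity of $\locdim$ and Brown--Pedersen's $\locdim(X)=\dim(\alpha X)$ are invoked.
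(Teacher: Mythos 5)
Your proposal is correct and follows essentially the same route as the paper, which establishes this proposition by exactly the commutative translations sketched in \autoref{HTpgr:NCDimThy:motivation}: Dowker's monotonicity of $\locdim$ for (D1)--(D2), disjoint unions and one-point compactifications for (D3)--(D4), the $\mathcal{P}$-likeness correspondence of \cite[Proposition 3.4]{Thi2011} for (D5), and Marde\v{s}i\'c's factorization theorem for (D6). You have merely written out in detail the standard facts the paper leaves implicit, and these details are all in order.
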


%------------------------------------------------------------------------------------------
\begin{rmk}
\label{HTprp:NCDimThy:axiomatic_dimThy}
    We do not suggest that the axioms of \autoref{HTdfn:NCDimThy:ncDimThy} uniquely define a dimension theory.
    This is clear since the axioms do not even rule out the assignments that give each $C^*$-algebra the same value.

    More interestingly,
    the following well-known theories are dimension theories for the class of all $C^*$-algebras:
    \begin{enumerate}
        \item
        The stable rank as defined by Rieffel, \cite[Definition 1.4]{Rie1983}.
        \item
        The real rank as introduced by Brown and Pedersen, \cite{BroPed1991}.
        \item
        The decomposition rank of Kirchberg and Winter, \cite[Definition 3.1]{KirWin2004}.
        \item
        The nuclear dimension of  Winter and Zacharias, \cite[Definition 2.1]{WinZac2010}.
    \end{enumerate}
    Indeed, for the real and stable rank, (D1) and (D2) are proven in \cite[Th\'{e}or\`{e}me 1.4]{Elh1995} and \cite[Theorems 4.3, 4.4]{Rie1983}.
    Axiom (D3) is easily verified, and (D4) holds by definition.
    It is shown in \cite[Theorem 5.1]{Rie1983} that (D5) holds in the special case of an approximation by a countable inductive limit, but the same argument works for general approximations and also for the real rank.
    Finally, it is noted in \cite[II.8.5.5, p.178]{Bla2006} that (D6) holds.

    For the nuclear dimension, axioms (D1), (D2), (D3), (D6) and (D4) follow from Propositions 2.5, 2.3, 2.6 and Remark 2.11 in \cite{WinZac2010}, and (D5) is easily verified.
    For the decomposition rank, (D5) is also easily verified, and axiom (D6) follows from \cite[Proposition 2.6]{WinZac2010} adapted for c.p.c. approximations instead of c.p. approximations.
    The other axioms (D1)-(D4) follow from Proposition 3.8, 3.11 and Remark 3.2 of \cite{KirWin2004} for separable \Cs{s}.
    Using axioms (D5) and (D6) this can be extended to all \Cs{s}.

    Thus, the idea of \autoref{HTdfn:NCDimThy:ncDimThy} is to collect the essential properties that many different noncommutative dimension theories satisfy.
    Our way of axiomatizing noncommutative dimension theories should therefore not be confused with the work on axiomatizing the dimension theory of metrizable spaces, see e.g. \cite{Nis1974} or \cite{Cha1994}, since these works pursue the goal of finding axioms that uniquely characterize covering dimension.
\end{rmk}

%------------------------------------------------------------------------------------------
\begin{prp}
\label{HTprp:NCDimThy:dim_limit}
    Let $d\colon\mathcal{C}\to\overline{\mathbb{N}}$ be a dimension theory,
    and let $(A_i,\varphi_{i,j})$ be an inductive system with $A_i\in\mathcal{C}$ and such that the limit $A:=\varinjlim A_i$ also lies in $\mathcal{C}$.
    Then $d(A)\leq\liminf_i d(A_i)$.
\end{prp}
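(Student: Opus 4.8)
The plan is to reduce the statement to axiom (D5) by producing, inside $A$, an approximating family of subalgebras from $\mathcal{C}$ whose dimensions are bounded by $n := \liminf_i d(A_i)$. If $n = \infty$ there is nothing to prove, so assume $n < \infty$. Unwinding the definition $\liminf_i d(A_i) = \sup_i \inf_{j \geq i} d(A_j)$ over the directed index set $I$, we see that for every $i$ the infimum $\inf_{j \geq i} d(A_j)$ is at most $n$; being an infimum of a non-empty subset of $\overline{\mathbb{N}}$ that is finite, it is attained, so there is $j \geq i$ with $d(A_j) \leq n$. Hence $I' := \{\, j \in I : d(A_j) \leq n \,\}$ is cofinal in $I$.

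Next I would replace the $A_i$ by their canonical images in the limit. Writing $\varphi_{i,\infty} \colon A_i \to A$ for the structure maps, set $B_i := \varphi_{i,\infty}(A_i)$. The image of a ${}^*$-homomorphism of $C^*$-algebras is a sub-$C^*$-algebra, so $B_i \subset A$, and $B_i \cong A_i/\ker(\varphi_{i,\infty})$ is a quotient of $A_i$; since $\mathcal{C}$ is closed under quotients and ${}^*$-isomorphisms, $B_i \in \mathcal{C}$, and (D2) gives $d(B_i) \leq d(A_i)$. From $\varphi_{i,\infty} = \varphi_{j,\infty} \circ \varphi_{i,j}$ for $i \leq j$ we get $B_i \subseteq B_j$, so $\{B_i\}_{i \in I}$ is an upward directed family with $\overline{\bigcup_i B_i} = A$, by the defining property of the $C^*$-inductive limit; restricting to the cofinal set $I'$ does not destroy this density. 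Consequently $\{B_j\}_{j \in I'}$ approximates $A$ in the sense of \ref{HTpgr:prelim:approximation}: given a finite $F \subset A$ and $\varepsilon > 0$, pick $i$ with $F \subset_\varepsilon B_i$ and then $j \in I'$ with $j \geq i$, so that $F \subset_\varepsilon B_i \subseteq B_j$.

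Now $\{B_j\}_{j \in I'}$ is a family of subalgebras of $A$ lying in $\mathcal{C}$, with $d(B_j) \leq d(A_j) \leq n$ for all $j \in I'$, that approximates $A \in \mathcal{C}$; axiom (D5) therefore yields $d(A) \leq n = \liminf_i d(A_i)$. The argument is mostly bookkeeping, and the one point that needs genuine care is that the connecting maps of the system need not be injective: this is exactly why one must work with the images $B_i$ and appeal to (D2) rather than pretending that $A_i$ sits inside $A$, and it is essentially the only place where the type of the index set — a general directed set rather than merely $\mathbb{N}$ — enters, via the passage to the cofinal subsystem $I'$.
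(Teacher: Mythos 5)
Your proof is correct and follows essentially the same route as the paper: pass to the images $\varphi_{i,\infty}(A_i)\subset A$, which are quotients handled by (D2), note that they approximate $A$ along a cofinal subset of the index set, and conclude with (D5). The only cosmetic difference is that you fix the specific cofinal set $\{j : d(A_j)\leq n\}$, whereas the paper quantifies over all cofinal subsets and takes an infimum; the substance is identical.
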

\begin{proof}
    See \cite[II.8.2.1, p.156]{Bla2006} for details about inductive systems and inductive limits.
    For each $i$, let $\varphi_{\infty,i}\colon A_i\to A$ denote the natural morphism into the inductive limit.
    Then the subalgebra $\varphi_{\infty,i}(A_i)\subset A$ is a quotient of $A_i$, and therefore $d(\varphi_{\infty,i}(A_i))\leq d(A_i)$ by (D2).
    If $J\subset I$ is cofinal, then $A$ is approximated by the collection of subalgebras $(\varphi_{\infty,i}(A_i))_{i\in J}$.
    It follows from (D5) that $d(A)$ is bounded by $\sup_{i\in J} d(A_i)$.
    Since this holds for each cofinal subset $J\subset I$, we obtain:
    \begin{align*}
        d(A)
            &\leq\inf\{\sup_{i\in J} d(A_i) \: |\: J\subset I \text{ cofinal}\}
            \ =\liminf_i d(A_i),
    \end{align*}
    as desired.
\end{proof}

%------------------------------------------------------------------------------------------
\begin{lma}
\label{HTprp:NCDimThy:sepSub_fullHer}
    Let $A$ be a $C^*$-algebra, let $B\subset A$ be a full, hereditary sub-$C^*$-algebra, and let $C\subset A$ be a separable sub-$C^*$-algebra.
    Then there exists a separable sub-$C^*$-algebra $D\subset A$ containing $C$ such that $D\cap B\subset D$ is full, hereditary.
\end{lma}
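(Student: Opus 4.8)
The plan is to build $D$ as the closure of an increasing union of separable subalgebras, using a back-and-forth (intertwining) argument. First I would observe that the hereditarity requirement comes for free: for \emph{any} sub-$C^*$-algebra $D\subset A$, if $a\in D$, $b\in D\cap B$ and $0\le a\le b$, then $a\in B$ because $B$ is hereditary in $A$, so $a\in D\cap B$; hence $D\cap B$ is hereditary in $D$ no matter how $D$ is chosen. Thus the whole problem reduces to arranging that $D\cap B$ be \emph{full} in $D$, i.e.\ that the closed ideal of $D$ it generates, namely $\overline{\operatorname{span}}\bigl(D\,(D\cap B)\,D\bigr)$, equals $D$.

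The engine of the construction is a one-step enlargement: given a separable $E\subset A$, I claim one can find a separable $E'\subset A$ with $E\subset E'$ and $E\subset\overline{\operatorname{span}}\bigl(E'(E'\cap B)E'\bigr)$. To see this, pick a countable dense subset $\{a_n\}$ of $E$; since $B$ is full in $A$, each $a_n$ lies in $\overline{\operatorname{span}}(ABA)=A$, so for all $n,k$ there is a finite sum $c_{n,k}=\sum_i x^{(i)}_{n,k} b^{(i)}_{n,k} y^{(i)}_{n,k}$ with $x^{(i)}_{n,k},y^{(i)}_{n,k}\in A$, $b^{(i)}_{n,k}\in B$, and $\|a_n-c_{n,k}\|<1/k$. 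Take $E'$ to be the $C^*$-subalgebra of $A$ generated by $E$ together with the countably many elements $x^{(i)}_{n,k},y^{(i)}_{n,k},b^{(i)}_{n,k}$; it is separable, the $b^{(i)}_{n,k}$ lie in $E'\cap B$, so $c_{n,k}\in\operatorname{span}(E'(E'\cap B)E')$, and letting $k\to\infty$ and closing up gives $E\subset\overline{\operatorname{span}}(E'(E'\cap B)E')$.

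Then I would iterate: set $E_0:=C$, $E_{n+1}:=(E_n)'$, and $D:=\overline{\bigcup_n E_n}$. This $D$ is separable and contains $C$, and $D\cap B$ is hereditary in $D$ by the first step. For fullness, observe that $E_n\subset\overline{\operatorname{span}}(E_{n+1}(E_{n+1}\cap B)E_{n+1})\subset\overline{\operatorname{span}}(D(D\cap B)D)$ for every $n$; since $\bigcup_n E_n$ is dense in $D$ and the right-hand side is closed, $D\subset\overline{\operatorname{span}}(D(D\cap B)D)\subset D$, so $D\cap B$ is full in $D$, as wanted. The one point that needs care — and the reason the construction cannot stop after a single step — is that the new generators of $E'$ (the $x$'s, $y$'s and $b$'s) need not themselves lie in the ideal of $E'$ generated by $E'\cap B$; the intertwining is exactly what absorbs this defect in the limit. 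Everything else is routine bookkeeping with separability and approximate units.
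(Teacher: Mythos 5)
Your proposal is correct and follows essentially the same route as the paper: an inductive separable exhaustion in which, at each stage, one adjoins to the current separable subalgebra the finitely many elements $x,b,y$ (with $b\in B$) witnessing that $B$ is full in $A$ for a countable dense set, and then takes $D$ to be the closure of the union, with hereditarity of $D\cap B$ in $D$ being automatic. The only cosmetic difference is that you absorb all tolerances $1/k$ into each single enlargement step rather than using precision $1/k$ at the $k$-th stage, which changes nothing of substance.
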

\begin{proof}
    The proof is inspired by the proof of \cite[Proposition 2.2]{Bla1978}, see also \cite[Theorem II.8.5.6, p.178]{Bla2006}.
    We inductively define separable sub-$C^*$-algebras $D_k\subset A$.
    Set $D_1:=C$, and assume $D_k$ has been constructed.
    Let $S_k:=\{x_1^k,x_2^k,\ldots\}$ be a countable, dense subset of $D_k$.
    Since $B$ is full in $A$, there exist for each $i\geq 1$ finitely many elements $a_{i,j}^k,c_{i,j}^k\in A$ and $b_{i,j}^k\in B$ such that
    \begin{align*}
        \|x_i^k - \sum_j a_{i,j}^k b_{i,j}^k c_{i,j}^k \| <1/k.
    \end{align*}

    Set $D_{k+1}:=C^*(D_k,a_{i,j}^k,b_{i,j}^k,c_{i,j}^k, i,j\geq 1)$.
    Then define $D:=\overline{\bigcup_k D_k}$, which is a separable sub-$C^*$-algebra of $A$ containing $C$.

    Note that $D\cap B\subset D$ is a hereditary sub-$C^*$-algebra, and let us check that it is also full.
    We need to show that the linear span of $D(D\cap B)D$ is dense in $D$.
    Let $d\in D$ and $\varepsilon>0$ be given.
    Note that $\bigcup_k S_k$ is dense in $D$.
    Thus, we may find $k$ and $i$ such that $\|d-x_i^k\|<\varepsilon/2$.
    We may assume $k\geq 2/\varepsilon$.
    By construction, there are elements $a_{i,j}^k,c_{i,j}^k\in D_{k+1}$ and $b_{i,j}^k\in B\cap D_{k+1}$ such that
    $\|x_i^k - \sum_j a_{i,j}^k b_{i,j}^k c_{i,j}^k \| <1/k$.
    It follows that the distance from $d$ to the closed linear span of $D(D\cap B)D$ is at most $\varepsilon$.
    Since $d$ and $\varepsilon$ were chosen arbitrarily, this shows that $D\cap B\subset D$ is full.
\end{proof}

%------------------------------------------------------------------------------------------
\begin{prp}
\label{HTprp:NCDimThy:TFAE_Morita}
    Let $d\colon\mathcal{C}^{*}\to\overline{\mathbb{N}}$ be a dimension theory.
	Then the following statements are equivalent:
	\begin{enumerate}[(1)  ]
		\item
            For all $C^*$-algebras $A, B$: If $B\subset A$ is a full, hereditary sub-$C^*$-algebra, then $d(A)=d(B)$.
		\item
			For all $C^*$-algebras $A, B$: If $A$ and $B$ are Morita equivalent, then $d(A)=d(B)$.
		\item
			For all $C^*$-algebras $A$: $d(A)=d(A\otimes\mathbb{K})$.
	\end{enumerate}
    Moreover, each of the statements is equivalent to the (a priori weaker) statement where the appearing $C^*$-algebras are additionally assumed to be separable.
		
    If $d$ satisfies the above conditions, and $B\subset A$ is a (not necessarily full) hereditary sub-$C^*$-algebra, then $d(B)\leq d(A)$.
\end{prp}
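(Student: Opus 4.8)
The plan is to prove the chain of equivalences $(1)\Leftrightarrow(2)\Leftrightarrow(3)$, then bootstrap each to its separable version, and finally deduce the hereditary monotonicity from $(1)$.

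For the implications, I would proceed as follows. The direction $(1)\Rightarrow(2)$: if $A$ and $B$ are Morita equivalent, a standard fact (see Brown's theorem, \cite{Bla2006}) is that $A$ and $B$ are both isomorphic to full, hereditary sub-$C^*$-algebras of a common $C^*$-algebra — concretely, the linking algebra, inside which both $A$ and $B$ sit as full corners. Applying $(1)$ twice gives $d(A) = d(\text{linking algebra}) = d(B)$. The direction $(2)\Rightarrow(3)$ is immediate since $A$ and $A\otimes\mathbb{K}$ are always Morita equivalent. For $(3)\Rightarrow(1)$: given a full hereditary $B\subset A$, Brown's theorem again gives $B\otimes\mathbb{K}\cong A\otimes\mathbb{K}$ (this is exactly the statement that full hereditary subalgebras of $\sigma$-unital $C^*$-algebras are stably isomorphic, but in the non-$\sigma$-unital case one needs the separable reduction below, or else to invoke the version of Brown's theorem without countability hypotheses — I would use the separable reduction to be safe). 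Then $d(B) = d(B\otimes\mathbb{K}) = d(A\otimes\mathbb{K}) = d(A)$ by $(3)$.

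The main obstacle, and the reason the ``Moreover'' clause is stated, is precisely that the cleanest form of Brown's stable isomorphism theorem requires $\sigma$-unitality, so a direct argument for arbitrary $A$ is awkward. I would resolve this by establishing the separable case first and then lifting. For the lift: suppose $(1)$ holds for separable $C^*$-algebras and let $B\subset A$ be full hereditary with $A$ arbitrary. To show $d(A)\leq d(B)$, take any separable $C\subset A$; by \autoref{HTprp:NCDimThy:sepSub_fullHer} there is a separable $D$ with $C\subset D\subset A$ and $D\cap B$ full hereditary in $D$, so by the separable case $d(D) = d(D\cap B)$. Now use (D6) applied to $B$ to enlarge $D\cap B$ inside $B$ to a separable $D'$ with $d(D')\leq d(B)$; enlarging $D$ correspondingly and iterating this back-and-forth finitely or countably many times produces a single separable $E\subset A$ with $E\cap B$ full hereditary in $E$, $d(E) = d(E\cap B)$, and $d(E\cap B)\leq d(B)$ — hence $d(C)\leq d(E)\leq d(B)$ after also passing $C$ into $E$. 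Since every separable subalgebra of $A$ is contained in such an $E$ with $d(E)\leq d(B)$, and $A$ is approximated by these, axiom (D5) gives $d(A)\leq d(B)$. The reverse inequality $d(B)\leq d(A)$ follows from the final assertion about general hereditary subalgebras, proved next, or directly: $B$ is a quotient-of-an-ideal situation — actually $B$ hereditary need not be an ideal, so I would instead argue $d(B)\leq d(A)$ via the same (D6)/(D5) machinery, or observe it is a consequence of $B$ sitting fully and hereditarily in itself while mapping into $A$. The separable reductions of $(2)$ and $(3)$ follow the same pattern using (D6) and the fact that Morita equivalence / stabilization restrict well to suitable separable subalgebras.

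Finally, for the last sentence: let $B\subset A$ be an arbitrary hereditary sub-$C^*$-algebra. Let $J := \overline{ABA}$ be the (closed, two-sided) ideal of $A$ generated by $B$; then $B\subset J$ is a \emph{full} hereditary sub-$C^*$-algebra of $J$. By $(1)$ applied to the pair $B\subset J$ we get $d(B) = d(J)$, and by (D1) we have $d(J)\leq d(A)$. Hence $d(B) = d(J)\leq d(A)$, as claimed. I expect this last step to be entirely routine once the equivalences are in hand; the genuine work is concentrated in handling the non-separable case via \autoref{HTprp:NCDimThy:sepSub_fullHer} together with (D5) and (D6).
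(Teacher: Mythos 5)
Your overall route---the linking algebra for $(1)\Rightarrow(2)$, Brown's stabilization theorem for the separable case, and a back-and-forth with \autoref{HTprp:NCDimThy:sepSub_fullHer}, (D6) and (D5) to remove separability---is essentially the paper's argument, but two steps as written have genuine gaps. First, at the end of your back-and-forth you claim $d(E\cap B)\leq d(B)$. The algebra whose dimension you actually control via (D5) is $B':=\overline{\bigcup_k E_k}=\overline{\bigcup_k (D_k\cap B)}$, and there is no reason that $E\cap B$ should equal $B'$: intersecting with a hereditary subalgebra does not commute with increasing unions, which is exactly the phenomenon warned about in \ref{HTpgr:prelim:approximation}(3). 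The repair is what the paper does: never mention $E\cap B$ at all, but check directly that $B'$ is a full, hereditary sub-$C^*$-algebra of $E$ (for $b_1,b_2\in D_k\cap B$ and $a\in D_k$ one has $b_1ab_2\in D_k\cap B$, and fullness of each $D_k\cap B$ in $D_k$ passes to the limit), then apply the separable statement to $B'\subset E$ together with (D5) to get $d(E)=d(B')\leq d(B)$. Relatedly, your intermediate inequality ``$d(C)\leq d(E)$'' is neither available (dimension theories are not monotone under arbitrary sub-$C^*$-algebras) nor needed; the correct conclusion is the one you state afterwards via (D5).

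Second, your handling of the reverse inequality $d(B)\leq d(A)$ is not in order. You cannot quote the proposition's final assertion (monotonicity under arbitrary hereditary subalgebras), since that assertion is itself deduced from condition (1)---this would be circular. And the one-sided iteration you describe applies (D6) only inside $B$, so it produces no subalgebras of dimension at most $d(A)$ approximating $B$. The paper resolves this by interleaving (D6) on both sides: at each stage it also enlarges $C^*(D_k,E_k)$ inside $A$ to a separable $C_{k+1}$ with $d(C_{k+1})\leq d(A)$. Then $d(A')\leq d(A)$ and $d(B')\leq d(B)$ hold simultaneously, Brown's theorem plus $(3s)$ give $d(A')=d(B')$, and since the $A'$ approximate $A$ and the $B'$ approximate $B$, both inequalities follow at once from (D5). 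With these two repairs your proposal coincides with the paper's proof; your explicit linking-algebra argument for $(1)\Rightarrow(2)$ is a useful addition, as the paper leaves that implication implicit.
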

\begin{proof}
    For each of the statements $(1),(2),(3)$, let us denote the statement where the appearing $C^*$-algebras are assumed to be separable by $(1s),(2s),(3s)$ respectively.
    For example:
	\begin{enumerate}%[(3s)  ]
		\item[$(3s)$  ]
			For all separable $C^*$-algebras $A$: $d(A)=d(A\otimes\mathbb{K})$.
	\end{enumerate}

    The implications ``$(1)\Rightarrow(1s)$'', ``$(2)\Rightarrow(2s)$'', and ``$(3)\Rightarrow(3s)$'' are clear.
    The implication ``$(2s)\Rightarrow(3s)$'' follows since $A$ and $A\otimes\mathbb{K}$ are Morita equivalent, and ``$(1s)\Rightarrow(3s)$'' follows since $A\subset A\otimes\mathbb{K}$ is a full, hereditary sub-$C^*$-algebra.

    It remains to show the implication ``$(3s)\Rightarrow(1)$''.
    Let $A$ be a $C^*$-algebra, and let $B\subset A$ be a full, hereditary sub-$C^*$-algebra.
    We need to show $d(A)=d(B)$.
    To that end, we will construct separable sub-$C^*$-algebras $A'\subset A$ and $B'\subset B$
    that approximate $A$ and $B$, respectively,
    and such that $d(A')=d(B')\leq\min\{d(A), d(B)\}$.
    Together with (D5), this implies $d(A)=d(B)$.

    So let $F\subset A$ and $G\subset B$ be finite sets.
    We may assume $G\subset F$.
    We want to find $A'$ and $B'$ with the mentioned properties and such that $F\subset A'$ and $G\subset B'$.

    We inductively define separable sub-$C^*$-algebras $C_k,D_k\subset A$ and $E_k\subset B$ such that:
    \begin{enumerate}[(a)  ]
        \item
        $C_k\subset D_k$ and $D_k\cap B\subset D_k$ is full,
        \item
        $D_k\cap B\subset E_k$ and $d(E_k)\leq d(B)$,
        \item
        $E_k,D_k\subset C_{k+1}$ and $d(C_{k+1})\leq d(A)$.
    \end{enumerate}

    We start with $C_1:=C^*(F)\subset A$.
    If $C_k$ has been constructed, we apply \autoref{HTprp:NCDimThy:sepSub_fullHer} to find $D_k$ satisfying $(a)$.
    If $D_k$ has been constructed, we apply (D6) to $D_k\cap B\subset B$ to find $E_k$ satisfying $(b)$.
    If $E_k$ has been constructed, we apply axiom (D6) to $C^*(D_k,E_k)\subset A$ to find $C_{k+1}$ satisfying $(c)$.

    Then let $A':=\overline{\bigcup_k C_k}=\overline{\bigcup_k D_k}$, and $B':=\overline{\bigcup_k (D_k\cap B)}=\overline{\bigcup_k E_k}$.
    The situation is shown in the following diagram:
    \begin{center}
        \makebox{
        \xymatrix@=20pt{
        C_k \ar@{}|{\subset}[r]
        & D_k \ar@{}|<<<<<{\subset}[r] \ar@{}|{\cup}[d]
        & {C^*(D_k,E_k)} \ar@{}|>>>>>{\subset}[r] \ar@{}|{\cup}[d]
        & C_{k+1} \ar@{}|{\subset}[r]
        & \ldots \ar@{}|{\subset}[r]
        & A' \\
        & D_k\cap B \ar@{}|{\subset}[r]
        & E_k \ar@{}|{\subset}[r]
        & \ldots & \ldots \ar@{}|{\subset}[r]
        & B'
        } }
    \end{center}

    Let us verify that $A'$ and $B'$ have the desired properties.
    First, since $d(C_k)\leq d(A)$ for all $k$, we get $d(A')\leq d(A)$ from (D5).
    Similarly, we get $d(B')\leq d(B)$.
    For each $k$ we have that $D_k\cap B\subset D_k$ is a full, hereditary sub-$C^*$-algebra, and therefore the same holds for $B'\subset A'$.
    Since $A'$ and $B'$ are separable (and hence $\sigma$-unital), we may apply Brown's stabilization theorem, \cite[Theorem 2.8]{Bro1977}, and obtain $A'\otimes\mathbb{K}\cong B'\otimes\mathbb{K}$.
    Together with the assumption $(3s)$, we obtain $d(A')=d(A'\otimes\mathbb{K})=d(B'\otimes\mathbb{K})=d(B')$.
    This finishes the construction of $A'$ and $B'$, and we deduce $d(A)=d(B)$ from (D5).

    Lastly, if $d$ satisfies condition (1), and $B\subset A$ is a (not necessarily full) hereditary sub-$C^*$-algebra, then $B$ is full, hereditary in the ideal $J\lhd A$ generated by $B$.
    By (D1) and condition (1) we have $d(B)=d(J)\leq d(A)$.
\end{proof}

%------------------------------------------------------------------------------------------
\begin{dfn}
\label{HTdfn:NCDimThy:Morita-inv}
\index{Dimension theory!Morita-invariant}
    A dimension theory $d\colon\mathcal{C}^{*}\to\overline{\mathbb{N}}$ is called \termdef{Morita-invariant} if it satisfies the conditions of \autoref{HTprp:NCDimThy:TFAE_Morita}.
\end{dfn}

%------------------------------------------------------------------------------------------
\noindent
    Given positive elements $a,b$ in a $C^*$-algebra, recall that we write $a=_\sigma b$ if $\|a-b\|<\sigma$.
    We write $a\ll_\sigma b$ if $ab=_\sigma a$.

%------------------------------------------------------------------------------------------
\begin{lma}
\label{HTprp:NCDimThy:TwistIntoHer}
    For every $\varepsilon>0$ there exists $\delta>0$ with the following property:
    Given a $C^*$-algebra $A$, and contractive elements $a,b\in A_+$ with $a=_\delta b$, there exists a partial isometry $v\in A^{**}$ such that:
    \begin{enumerate}
        \item
        $v(a-\delta)_+v^*\in \overline{bAb}$.
        \item
        If $d\in A_+$ is contractive with $d\ll_\sigma a$, then $vdv^*=_{4\sigma+\varepsilon} d$.
    \end{enumerate}
\end{lma}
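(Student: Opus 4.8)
The plan is to build the partial isometry $v$ from the polar decomposition of an element that conjugates a functional-calculus cut-down of $a$ into the hereditary subalgebra $\overline{bAb}$. Concretely, I would use the standard trick: since $a =_\delta b$, the element $(a-\delta)_+$ satisfies $(a-\delta)_+ \ll f(b)$ for a suitable continuous function $f$ with $f(b)\in\overline{bAb}$ (because $b$ acts almost as a unit on $(a-\delta)_+$ up to an error controlled by $\delta$). Then consider $x := f(b)^{1/2}(a-\delta)_+^{1/2} \in \overline{bAb}$, or more symmetrically work with the element $y$ implementing a Murray--von Neumann-type equivalence between $(a-\delta)_+$ and a positive element inside $\overline{bAb}$. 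Taking $v$ to be the partial isometry in the polar decomposition $y = v|y|$ inside $A^{**}$, one gets $v(a-\delta)_+ v^* \in \overline{bAb}$, which is (1). This is essentially the ``Cuntz comparison up to $\varepsilon$'' machinery from the theory of the Cuntz semigroup (cf. Rørdam's or Kirchberg--Rørdam's lemmas), and I would cite or reprove the version I need.

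For (2), the point is to estimate how $v$ moves an element $d \ll_\sigma a$. First I would note that $d \ll_\sigma a$ implies $d$ is, up to error $O(\sigma)$, supported where $a$ is close to $1$, so in particular $d =_{O(\sigma)} (a-\delta)_+^{1/2} d' (a-\delta)_+^{1/2}$ type estimates hold once $\delta$ is small relative to the gap — more precisely, $d(a-\delta)_+ =_{\sigma + \delta} d$ and hence $d$ is approximately fixed by the ``support projection'' of $(a-\delta)_+$ that $v^*v$ dominates. Then $vdv^* - d$ can be expanded and bounded term by term: writing $d \approx g(a) d g(a)$ for $g$ a function that is $1$ on the relevant spectral region, and using that $v g((a-\delta)_+) = v$-type relations hold because $g((a-\delta)_+)$ is a unit for the relevant corner, one chases the inequalities to land at a bound of the form $4\sigma + \varepsilon$. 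The factor $4$ should come from applying a ``$d \ll_\sigma a \Rightarrow d =_{2\sigma} (\text{cut-down})$'' estimate twice (once on each side), and the $\varepsilon$ absorbs the functional-calculus error that is made small by choosing $\delta = \delta(\varepsilon)$ appropriately.

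The main obstacle I anticipate is bookkeeping the constants: one has to choose $\delta$ small enough that (i) $(a-\delta)_+$ genuinely sits inside $\overline{bAb}$ after conjugation, which requires $f(b)$ to be a unit for $(a-\delta)_+$ and this uses $\|a-b\|<\delta$ together with a continuity-of-functional-calculus estimate, and (ii) the perturbation introduced by replacing $a$ with $(a-\delta)_+$ and by the functional-calculus approximations does not exceed the allotted $\varepsilon$, uniformly in $A$, $a$, $b$, $d$. The uniformity is what forces the argument to be purely in terms of universal $C^*$-identities and norm estimates (no appeal to a particular representation except passing to $A^{**}$ for the polar decomposition), so I would be careful to phrase every inequality as a consequence of the relations $a=_\delta b$, $d\ll_\sigma a$, $\|a\|,\|b\|,\|d\|\le 1$ alone. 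A secondary subtlety is that $v$ is only a partial isometry in the bidual, so statements like ``$v^*v$ is a unit for $(a-\delta)_+$'' must be interpreted in $A^{**}$ and then the conclusions $v(a-\delta)_+v^* \in \overline{bAb}$ and $vdv^* =_{4\sigma+\varepsilon} d$ pulled back to $A$ — both are norm statements about elements that a priori live in $A^{**}$ but in fact lie in $A$, so this is harmless once stated correctly.
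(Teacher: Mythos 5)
Your construction of $v$ is the same as the paper's: take $z:=b^{1/2}(a-\delta)_+^{1/2}$ (your $f(b)^{1/2}(a-\delta)_+^{1/2}$) and let $v\in A^{**}$ come from the polar decomposition $z=v|z|$. However, the justification you give for (1) is not correct as stated: in general there is \emph{no} continuous $f$ with $(a-\delta)_+\ll f(b)$ in the exact sense $xy=x$, nor does $b$ act almost as a unit on $(a-\delta)_+$ (take $\|a\|=1/2$, or $b=upu^*$ a small rotation of a projection $a=p$ in $M_2$; then $\overline{bAb}$ does not even contain $(a-\delta)_+$). What is true, and all that is needed, is the operator inequality coming from $a=_\delta b$: $a-\delta\leq b$, hence $(a-\delta)_+^2\leq(a-\delta)_+^{1/2}b(a-\delta)_+^{1/2}=|z|^2\leq(a-\delta)_+$, so $(a-\delta)_+$ lies in the hereditary subalgebra generated by $|z|$, and $v(\cdot)v^*$ maps that algebra into $\overline{|z^*|A|z^*|}\subseteq\overline{bAb}$. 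This much is repairable and is exactly how the paper proves (1).

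The genuine gap is in your mechanism for (2). You propose $d\approx g(a)dg(a)$ for a spectral cutoff $g$ and then ``$vg((a-\delta)_+)=v$-type relations because $g((a-\delta)_+)$ is a unit for the relevant corner''. But $v^*v$ dominates the support projection of $(a-\delta)_+$ (indeed equals it, by the inequalities above), so $vg((a-\delta)_+)=v$ would force $g\equiv 1$ on $\mathrm{spec}((a-\delta)_+)\setminus\{0\}$, which is impossible for continuous $g$ with $g(0)=0$ unless there is a spectral gap; the relation you rely on is simply false in general. If you instead use approximate versions, the cutoff itself ruins the constants: from $d\ll_\sigma a$ one only gets $\|d(1-g(a))\|\lesssim\sqrt{\sigma/\eta}$ (at best $\sigma/\eta$ in the commutative case), where $\eta$ is the length of the ramp of $g$ and is fixed by $\delta=\delta(\varepsilon)$ \emph{before} $\sigma$ and $d$ are given; such terms cannot be absorbed into $4\sigma+\varepsilon$. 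The paper avoids spectral cutoffs altogether: from $a=_\delta b$ one gets, with $\varepsilon_1$ a modulus of continuity for the square root, $z=_{2\varepsilon_1(\delta)}a$ and $|z|=_{\varepsilon_1(3\varepsilon_1(\delta))}a$, and then the chain
$vdv^*=_{2\sigma}v(ada)v^*\approx v|z|d|z|v^*=zdz^*\approx ada=_{2\sigma}d$
uses the \emph{exact} identity $v|z|=z$, so that conjugation by $v$ becomes two-sided multiplication by $z\approx a$. All intermediate errors depend only on $\delta$, and the only $\sigma$-dependent contributions are the two cut-down steps $ada=_{2\sigma}d$, which is where the $4\sigma$ comes from. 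You correctly anticipated the source of the factor $4$, but without the identity $v|z|=z$ and the norm estimates $z\approx a$, $|z|\approx a$, your outlined route does not produce the stated bound.
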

\begin{proof}
    To simplify the proof, we will fix $\delta>0$ and verify the statement for $\varepsilon=\varepsilon(\delta)$ with the property that $\varepsilon(\delta)\to 0$ when $\delta\to 0$.

    Fix $\delta>0$.
    Let $A$ be a $C^*$-algebra, and let $a,b\in A_+$ be contractive elements such that $a=_\delta b$.
    Without loss of generality we may assume that $A$ is unital.
    It is well-known that there exists $s\in A$ such that $s(a-\delta)_+s^*\in \overline{bAb}$, see \cite[Proposition 2.4]{Ror1992}.
    One could follow the proof to obtain an estimate similar to that in statement (2).
    It is, however, easier to find $v\in A^{**}$ such that (1) and (2) hold, and for our application in \autoref{HTprp:NCDimThy:ApproxHer} it is sufficient that $v$ lies in $A^{**}$.

    It follows from $a=_\delta b$ that $a-\delta\leq b$, and hence:
    \begin{align*}
        (a-\delta)_+^2
        = (a-\delta)_+^{1/2}(a-\delta)(a-\delta)_+^{1/2}
        \leq (a-\delta)_+^{1/2}b(a-\delta)_+^{1/2}.
    \end{align*}

    Set $z := b^{1/2}(a-\delta)_+^{1/2}$.
    Then:
    \[
    |z| = ((a-\delta)_+^{1/2}b(a-\delta)_+^{1/2})^{1/2},
    \quad\quad
    |z^*| = (b^{1/2}(a-\delta)_+b^{1/2})^{1/2},
    \]
    and we let $z=v|z|$ be the polar decomposition of $z$, with $v\in A^{**}$.
    We claim that $v$ has the desired properties.
    First, note that $v ((a-\delta)_+^{1/2}b(a-\delta)_+^{1/2}) v^* = b^{1/2}(a-\delta)_+b^{1/2} \in \overline{bAb}$, and therefore also $v(a-\delta)_+ v^* \in \overline{bAb}$, which verifies property (1).

    For property (2), let us start by estimating the distance from $a$ to $z$ and $|z|$.
    It is known that there exists an assignment $\sigma\mapsto\varepsilon_1(\sigma)$ with the following property:
    Whenever $x,y$ are positive, contractive elements of a $C^*$-algebra, and $x=_\sigma y$, then $x^{1/2}=_{\varepsilon_1(\sigma)}y^{1/2}$, and moreover $\varepsilon_1(\sigma)\to 0$ as $\sigma\to 0$.
    We may assume $\sigma\leq\varepsilon_1(\sigma)$, and we will use this to simplify some estimates below.

    Then, using $(a-\delta)_+=_\delta a$ and so $(a-\delta)_+^{1/2}=_{\varepsilon_1(\sigma)} a^{1/2}$ at the second step,
    \begin{align}
    \label{eq:HTprp:NCDimThy:ApproxHer:1}
        z
        = b^{1/2}(a-\delta)_+^{1/2}
        =_{\varepsilon_1(\delta)} b^{1/2}a^{1/2}
        =_{\varepsilon_1(\delta)} a.
    \end{align}

    For $|z|$ we compute, using $(a-\delta)_+^{1/2}b(a-\delta)_+^{1/2} =_{3\varepsilon_1(\delta)}a^2$ at the second step,
    \begin{align}
    \label{eq:HTprp:NCDimThy:ApproxHer:2}
        |z|
        = ((a-\delta)_+^{1/2}b(a-\delta)_+^{1/2})^{1/2}
        =_{\varepsilon_1(3\varepsilon_1(\delta))} (a^2)^{1/2}
        = a.
    \end{align}

    Let $d\in A_+$ be contractive with $d\ll_\sigma a$.
    Then $ada=_{2\sigma} d$, and we may estimate the distance from $vdv^*$ to $d$ as follows:
    \begin{align*}
        vdv^*
        =_{2\sigma} vadav^*
        \stackrel{\eqref{eq:HTprp:NCDimThy:ApproxHer:2}}{ =\mathrel{\mkern-3mu}= }_{ 2\varepsilon_1(3\varepsilon_1(\delta)) } v|z|d|z|v^*
        = zdz
        \stackrel{\eqref{eq:HTprp:NCDimThy:ApproxHer:1}}{ =\mathrel{\mkern-3mu}= }_{4\varepsilon_1(\delta)} ada
        =_{2\sigma} d.
    \end{align*}

    Thus, $\|vdv^*-d\|\leq 4\sigma +2\varepsilon_1(3\varepsilon_1(\delta)) +4\varepsilon_1(\delta)$,
    and this distance converges to $4\sigma$ when $\delta\to 0$.
    This completes the proof.
\end{proof}

%------------------------------------------------------------------------------------------
\begin{prp}
\label{HTprp:NCDimThy:ApproxHer}
    Let $A$ be a $C^*$-algebra, and let $B\subset A$ be a hereditary sub-$C^*$-algebra.
    Assume $A$ is approximated by sub-$C^*$-algebras $A_i\subset A$.
    Then $B$ is approximated by subalgebras that are isomorphic to hereditary sub-$C^*$-algebras of the algebras $A_i$, i.e., given a finite set $F\subset B$ and $\varepsilon>0$, there exists a sub-$C^*$-algebra $B'\subset B$ such that $F\subset_\varepsilon B'$ and $B'$ is isomorphic to a hereditary sub-$C^*$-algebra of $A_i$ for some $i$.
\end{prp}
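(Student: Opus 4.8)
The plan is to reduce the statement to a single approximation step and then apply \autoref{HTprp:NCDimThy:TwistIntoHer} to transport a hereditary subalgebra of some $A_i$ into $B$. First I would fix a finite set $F\subset B$ and $\varepsilon>0$, and enlarge $F$ so that it contains a ``local unit'' for itself: pick a positive contraction $e\in B$ with $x\ll_\sigma x$ — more precisely $e x = _\sigma x$ and $x e = _\sigma x$ for all $x\in F$ — with $\sigma$ small (to be specified in terms of $\varepsilon$), which is possible since $B$ has an approximate unit and $F$ is finite. Then I would apply \autoref{HTprp:NCDimThy:TwistIntoHer} with that $\varepsilon$ to obtain the corresponding $\delta>0$, and use the hypothesis that $\{A_i\}$ approximates $A$ to find $i$ and a positive contraction $b\in A_i$ with $e=_\delta b$ (approximating $e$ by an element of $A_i$ and, if necessary, applying a small functional-calculus correction to make it positive and contractive; this costs only a quantity tending to $0$ with $\delta$).

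Next, \autoref{HTprp:NCDimThy:TwistIntoHer} yields a partial isometry $v\in A^{**}$ with $v(e-\delta)_+v^*\in\overline{bA b}$ and $vdv^*=_{4\sigma+\varepsilon}d$ for every contractive $d\in e_+$ with $d\ll_\sigma e$. The subalgebra I would take is $B':=v\big(\overline{(e-\delta)_+ A_i (e-\delta)_+}\big)v^*$, or rather the hereditary subalgebra of $A$ it generates; by property (1) this equals $v\,C\,v^*$ where $C:=\overline{(e-\delta)_+ A_i (e-\delta)_+}$ is a hereditary sub-$C^*$-algebra of $A_i$, and conjugation by $v$ is a $*$-isomorphism on $C$ because $|z|=((e-\delta)_+^{1/2}b(e-\delta)_+^{1/2})^{1/2}$ is strictly positive on $C$ (so $v^*v$ dominates the support projection of $C$), hence $B'\cong C$, a hereditary subalgebra of $A_i$. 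It remains to check $B'\subset B$ and $F\subset_\varepsilon B'$. For the inclusion $B'\subset B$: since $e\in B$ and $B$ is hereditary, $(e-\delta)_+\in B$, so $C\subset \overline{BAB}$; and $v\,C\,v^*\in\overline{bAb}$ by (1); combining, $B'$ lies in the hereditary subalgebra generated by elements of $B$, hence in $B$. (One needs here that $\overline{(e-\delta)_+A(e-\delta)_+}=\overline{(e-\delta)_+B(e-\delta)_+}$, which holds since $(e-\delta)_+\in B$ and $B$ is hereditary.)

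For $F\subset_\varepsilon B'$: given $x\in F$, set $d:=(e-\delta)_+ x (e-\delta)_+$ — this lies in $C$, is close to $x$ within a quantity controlled by $\sigma$ and $\delta$ (using $e x e=_{2\sigma}x$ and $(e-\delta)_+=_\delta e$), and satisfies the hypothesis of property (2) up to small error, so $vdv^*\in B'$ is within $4\sigma+\varepsilon+o(1)$ of $d$, hence within a total error tending to $\varepsilon$ of $x$ as $\sigma,\delta\to0$. Choosing $\sigma$ and $\delta$ small enough at the outset makes this total error less than $\varepsilon$, completing the argument.

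The main obstacle I anticipate is the bookkeeping of the various error terms and, more substantively, verifying that conjugation by the partial isometry $v\in A^{**}$ restricts to a genuine (spatial) isomorphism of the $C^*$-algebra $C\subset A_i$ onto a hereditary subalgebra of $A$ sitting inside $B$ — in particular that $v^*v$ acts as a unit on $C$ so that $\|vcv^*\|=\|c\|$ for $c\in C$. This is where the precise form of the polar decomposition $z=v|z|$ with $|z|$ strictly positive on $C$ has to be used carefully, since $v$ itself is only in the bidual. Everything else is a routine $\varepsilon$-$\delta$ chase built on \autoref{HTprp:NCDimThy:TwistIntoHer}.
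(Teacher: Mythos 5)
Your overall strategy --- pick a local unit $e\in B_+$ for $F$, approximate it by a positive contraction in some $A_i$, and use \autoref{HTprp:NCDimThy:TwistIntoHer} to conjugate a hereditary subalgebra of $A_i$ into $B$ --- is the same as the paper's, but you apply the lemma with the two elements in swapped roles, and this breaks exactly the claims the statement requires. With your orientation (the lemma's $a$ is $e\in B$, the lemma's $b$ is your $b\in A_i$), the algebra you conjugate, $C=\overline{(e-\delta)_+A_i(e-\delta)_+}$, is \emph{not} a hereditary sub-$C^*$-algebra of $A_i$: since $(e-\delta)_+\notin A_i$, this set is not a subalgebra of $A_i$ at all, and it is not even clearly closed under multiplication, because products create middle factors $x(e-\delta)_+^2y$ that need not lie in $A_i$. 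The inclusion $B'\subset B$ also fails in your orientation: property (1) places $vCv^*$ inside $\overline{bAb}$, the hereditary subalgebra of $A$ generated by $b\in A_i$, which has no reason to be contained in $B$; your step ``combining, $B'$ lies in the hereditary subalgebra generated by elements of $B$'' conflates $\overline{bAb}$ (generated by an element of $A_i$) with a hereditary subalgebra generated by elements of $B$. Finally, your approximants $d=(e-\delta)_+x(e-\delta)_+$ do not lie in $C$, since $x\in B$ is not in $A_i$; without first replacing $x$ by an element of $A_i$, the elements you exhibit in $B'$ are not traced back to the hereditary subalgebra of $A_i$.

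The paper's proof uses the opposite orientation and thereby avoids all three problems: it applies \autoref{HTprp:NCDimThy:TwistIntoHer} with the $A_i$-element (call it $a$, your $b$) in the role of the lemma's $a$, and the local unit in $B$ (your $e$) in the role of the lemma's $b$. Then $A':=(a-\delta)_+A_i(a-\delta)_+$ is genuinely hereditary in $A_i$ because $(a-\delta)_+\in A_i$; property (1) gives $B':=vA'v^*\subset\overline{eAe}\subset B$ because $e\in B_+$ and $B$ is hereditary; and each $x\in F$ is first approximated by a positive contraction $x'\in A_i$, then compressed to $x'':=(a-\delta)_+x'(a-\delta)_+\in A'$, so that property (2) (applied with $x''\ll_\sigma a$ for suitable $\sigma$) shows $vx''v^*\in B'$ is within $\varepsilon$ of $x$. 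Your closing concern about $v\in A^{**}$ is handled as you suggest, but with the corrected roles: since $a-\delta\leq e$ one gets $|z|^2\geq(a-\delta)_+^2$, so $v^*v$ dominates the support projection of $(a-\delta)_+$ and acts as a unit on $A'$, making $x\mapsto vxv^*$ an isomorphism of $A'$ onto $B'$. As written, however, your argument does not establish the statement.
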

\begin{proof}
    Let $F\subset B$ and $\varepsilon>0$ be given.
    We let $\gamma=\varepsilon/36$, which is justified by the estimates that we obtain through the course of the proof.
    Without loss of generality, we may assume that $F$ consists of positive, contractive elements.

    There exists $b\in B_+$ such that $b$ almost acts as a unit on the elements of $F$ in the sense that $x\ll_\gamma b$ for all $x\in F$.
    Let $\delta>0$ be the tolerance we get from \autoref{HTprp:NCDimThy:TwistIntoHer} for $\gamma$.
    We may assume $\delta\leq\gamma$, and to simplify the computations below we will often estimate a distance by $\gamma$, even if it could be estimated by $\delta$.

    By assumption, the algebras $A_i$ approximate $A$.
    Thus, there exists $i$ such that there is a positive, contractive element $a\in A_i$ with $a=_\delta b$, and such that for each $x\in F$ there exists a positive, contractive $x'\in A_i$ with $x'=_\delta x$.
    Then:
    \begin{align*}
        x'(a-\delta)_+=_{3\delta}xb=_\gamma x=_\delta x',
    \end{align*}
    and so $x'\ll_{5\gamma}(a-\delta)_+$, since $\delta\leq\gamma$.
    In general, if two positive, contractive elements $s,t$ satisfy $s\ll_\sigma t$, then $s=_{2\sigma}tst\ll_\sigma t$.
    Thus, if for each $x\in F$ we set $x'':=(a-\delta)_+ x' (a-\delta)_+$, then we obtain:
    \begin{align}
    \label{eq:HTprp:NCDimThy:ApproxHer}
        x=_\gamma x'=_{10\gamma} x'' \ll_{5\gamma} (a-\delta)_+.
    \end{align}

    Since $a=_\delta b$, we obtain from \autoref{HTprp:NCDimThy:TwistIntoHer} a partial isometry $v\in A^{**}$ such that $v(a-\delta)_+v^*\in \overline{bAb}$.
    Let $A':=(a-\delta)_+ A_i(a-\delta)_+$, which is a hereditary sub-$C^*$-algebra of $A_i$.
    The map $x\mapsto vxv^*$ defines an isomorphism from $A'$ onto $B':=vA'v^*$.
    Since $B$ is hereditary, $B'$ is a sub-$C^*$-algebra of $B$.
    Let us estimate the distance from $F$ to $B'$.

    For each $x\in F$, we have computed in \eqref{eq:HTprp:NCDimThy:ApproxHer} that $x'' \ll_{5\gamma} (a-\delta)_+$, which implies $x''\ll_{6\gamma}a$.
    From statement (2) of \autoref{HTprp:NCDimThy:TwistIntoHer} we deduce $vx''v^* =_{25\gamma}x''$.
    Altogether, the distance between $x$ and $vx''v^*$ is at most $36\gamma$.
    Since $vx''v^*\in B'$, and since we chose $\gamma=\varepsilon/36$, we have $F\subset_\varepsilon B'$, as desired.
\end{proof}

%------------------------------------------------------------------------------------------
\begin{prp}
\label{HTprp:NCDimThy:DimThy_extension_by_approx}
    Let $d\colon\mathcal{C}\to\overline{\mathbb{N}}$ be a dimension theory.
    For any $C^*$-algebra $A$ define:
    \begin{align}
    	\widetilde{d}(A) &:= \inf\{k\in\mathbb{N}\: |\: A
        \text{ is approximated by sub-$C^*$-algebras } B\in\mathcal{C} \text{ with } d(B)\leq k\},
    \end{align}
    where we define the infimum of the empty set to be $\infty\in\overline{\mathbb{N}}$.

    Then $\widetilde{d}\colon\mathcal{C}^{*}\to\overline{\mathbb{N}}$ is a dimension theory that agrees with $d$ on $\mathcal{C}$.

    If, moreover, $\mathcal{C}$ is closed under stable isomorphism, and $d(A)=d(A\otimes\mathbb{K})$ for every (separable) $A\in\mathcal{C}$, then $\widetilde{d}$ is Morita-invariant.
\end{prp}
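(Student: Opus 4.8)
The plan is to verify axioms (D1)--(D6) for $\widetilde d$ on the class $\mathcal{C}^{*}$ of all $C^{*}$-algebras, to note that $\widetilde d$ restricts to $d$ on $\mathcal{C}$, and then to deduce Morita-invariance from \autoref{HTprp:NCDimThy:TFAE_Morita}. That $\widetilde d$ agrees with $d$ on $\mathcal{C}$ is immediate: for $A\in\mathcal{C}$ one inequality comes from approximating $A$ by itself, and the reverse is exactly axiom (D5) for $d$ (which needs $A\in\mathcal{C}$). Isomorphism invariance of $\widetilde d$ is clear. For (D1)--(D4) I would combine the closure properties of $\mathcal{C}$ with the facts in \ref{HTpgr:prelim:approximation}: if $A$ is approximated by $B_i\in\mathcal{C}$ with $d(B_i)\le k$, then an ideal $J\lhd A$ is approximated by the ideals $B_i\cap J\in\mathcal{C}$ with $d(B_i\cap J)\le k$, the quotient $A/J$ by $B_i/(B_i\cap J)\in\mathcal{C}$ with $d\le k$, and the minimal unitization $\widetilde A$ by the subalgebras $B_i+\mathbb{C}1_{\widetilde A}\cong\widetilde{B_i}\in\mathcal{C}$ with $d(\widetilde{B_i})=d(B_i)\le k$; for direct sums one approximates $A\oplus B$ by $A_i\oplus B_j\in\mathcal{C}$ (using (D3) for $d$), while $\widetilde d(A\oplus B)\ge\max\{\widetilde d(A),\widetilde d(B)\}$ follows from the (D1) already shown for $\widetilde d$. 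Axiom (D5) for $\widetilde d$ is immediate from transitivity of approximation: an iterated approximation of $A$ by $A_i$ by $B_{i,j}\in\mathcal{C}$ collapses to an approximation of $A$ by the $B_{i,j}$.

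The substantial point in this part is axiom (D6). Given $A$, a separable $C\subset A$, and $k:=\widetilde d(A)<\infty$, I would build an increasing sequence of separable subalgebras $D_1=C\subset D_2\subset\cdots$ and set $D:=\overline{\bigcup_n D_n}$. Having constructed $D_n$, fix a dense sequence $(x^n_j)_{j\ge1}$ in $D_n$; for each $m\ge1$ use the definition of $\widetilde d$ to choose a subalgebra $B^{(n,m)}\subset A$ in $\mathcal{C}$ with $d(B^{(n,m)})\le k$ and elements $y^{(n,m)}_1,\ldots,y^{(n,m)}_m\in B^{(n,m)}$ with $\|x^n_j-y^{(n,m)}_j\|<1/n$ for $j\le m$; then apply axiom (D6) for $d$ to $C^{*}(y^{(n,m)}_1,\ldots,y^{(n,m)}_m)\subset B^{(n,m)}$ to obtain a separable $E^{(n,m)}\in\mathcal{C}$ with $d(E^{(n,m)})\le k$ between the two; finally put $D_{n+1}:=C^{*}\bigl(D_n,\,E^{(n,m)}:m\ge1\bigr)$, which is again separable. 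Each $E^{(n,m)}$ is then a subalgebra of $D$ lying in $\mathcal{C}$ with $d\le k$, and since $\{x^n_j:n,j\ge1\}$ is dense in $D$ and every finite subset of it lies within $1/n$ of some $E^{(n,m)}$, the family $\{E^{(n,m)}\}$ approximates $D$. Hence $\widetilde d(D)\le k=\widetilde d(A)$, as required.

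For the last statement, assume $\mathcal{C}$ is closed under stable isomorphism and $d(B)=d(B\otimes\mathbb{K})$ for every separable $B\in\mathcal{C}$. Since $\widetilde d$ is now known to be a dimension theory on $\mathcal{C}^{*}$, by \autoref{HTprp:NCDimThy:TFAE_Morita} it suffices to prove condition $(3s)$, i.e.\ $\widetilde d(A)=\widetilde d(A\otimes\mathbb{K})$ for separable $A$. The argument used for (D6) (with no enlargement of $C$ needed) shows that for separable $A$ one has $\widetilde d(A)\le k$ iff $A$ is approximated by \emph{separable} subalgebras in $\mathcal{C}$ of $d$-value $\le k$. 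For ``$\le$'': given such separable approximants $B_i$ of $A$, the matrix amplifications $B_i\otimes M_m$ approximate $A\otimes\mathbb{K}$, are separable, lie in $\mathcal{C}$ (being stably isomorphic to $B_i$), and satisfy $d(B_i\otimes M_m)=d(B_i\otimes M_m\otimes\mathbb{K})=d(B_i\otimes\mathbb{K})=d(B_i)\le k$. For ``$\ge$'': given separable approximants $B_i\in\mathcal{C}$ of $A\otimes\mathbb{K}$ with $d(B_i)\le k$, apply \autoref{HTprp:NCDimThy:ApproxHer} to the hereditary subalgebra $A\cong A\otimes e_{11}\subset A\otimes\mathbb{K}$: it is approximated by subalgebras $B'$, each isomorphic to a hereditary subalgebra $H$ of some $B_i$. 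For such an $H$, let $J\lhd B_i$ be the ideal it generates; then $H$ is full, hereditary in the separable algebra $J\in\mathcal{C}$, so Brown's stabilization theorem \cite[Theorem 2.8]{Bro1977} yields $H\otimes\mathbb{K}\cong J\otimes\mathbb{K}$, whence $H$ is stably isomorphic to $J$, so $H\in\mathcal{C}$ and $d(H)=d(H\otimes\mathbb{K})=d(J\otimes\mathbb{K})=d(J)\le d(B_i)\le k$; thus $d(B')\le k$ and $B'\in\mathcal{C}$. Therefore $A$ is approximated by subalgebras in $\mathcal{C}$ of $d$-value $\le k$, i.e.\ $\widetilde d(A)\le k$.

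I expect the main obstacles to be the bookkeeping in the (D6)-construction — ensuring that the approximating subalgebras $E^{(n,m)}$ end up inside $D$ while still approximating it — and, in the ``$\ge$'' half of Morita-invariance, the point that hereditary subalgebras of the approximants cannot be used as they stand: one must pass to the ideal they generate and invoke Brown's theorem together with closure under stable isomorphism to return to $\mathcal{C}$ with controlled $d$-value, since applying the full-hereditary characterization \autoref{HTprp:NCDimThy:TFAE_Morita}(1) to $\widetilde d$ directly would be circular.
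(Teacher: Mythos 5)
Your proposal is correct and follows essentially the same route as the paper: agreement with $d$ on $\mathcal{C}$ via (D5), routine verification of (D1)--(D5), a (D6)-construction that interleaves the definition of $\widetilde d$ with axiom (D6) for $d$ along an increasing sequence of separable subalgebras, and Morita-invariance reduced to condition (3s) of the equivalence proposition, using tensor amplifications in one direction and the approximation-by-hereditary-subalgebras proposition plus Brown's stabilization theorem and closure under stable isomorphism in the other. The only differences are cosmetic (matrix amplifications $B_i\otimes M_m$ instead of $B_i\otimes\mathbb{K}$, and spelling out the hereditary-subalgebra/Brown argument that the paper states without proof), so no further comment is needed.
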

\begin{proof}
    If $A \in\mathcal{C}$, then clearly $\widetilde{d}(A)\leq d(A)$, and the converse inequality follows from axiom (D5).
    Axioms (D1)-(D5) for $\widetilde{d}$ are easy to check.

    Let us check axiom (D6) for $\widetilde{d}$.
    Assume $A$ is a $C^*$-algebra, and assume $C\subset A$ is a separable sub-$C^*$-algebra.
    Set $n:=\widetilde{d}(A)$, which we may assume is finite.
    We need to find a separable sub-$C^*$-algebra $D\subset A$ such that $C\subset D$ and $\widetilde{d}(D)\leq n$.

    We first note the following:
    For a finite set $F\subset A$, and $\varepsilon>0$ we can find a separable sub-$C^*$-algebra $A(F,\varepsilon)\subset A$ with $d(A(F,\varepsilon))\leq n$ and $F\subset_\varepsilon A(F,\varepsilon)$.
    Indeed, by definition of $\widetilde{d}$ we can first find a sub-$C^*$-algebra $B\subset A$ with $d(B)\leq n$ and a finite subset $G\subset B$ such that $F\subset_\varepsilon G$.
    Applying (D6) to $C^*(G)\subset B$, we may find a separable sub-$C^*$-algebra $A(F,\varepsilon)\subset B$ with $d(A(F,\varepsilon))\leq n$ and $C^*(G)\subset A(F,\varepsilon)$, which implies $F\subset_\varepsilon A(F,\varepsilon)$.

    We will inductively define separable sub-$C^*$-algebras $D_k\subset A$ and countable dense subsets $S_k=\{x_1^k,x_2^k,\ldots\}\subset D_k$ as follows:
    We start with $D_1:=C$ and choose any countable dense subset $S_1\subset D_1$.
    If $D_l$ and $S_l$ have been constructed for $l\leq k$, then set:
    \begin{align*}
        D_{k+1}:=C^*(D_k, A(\{x_i^j \: |\: i,j\leq k\},1/k)) \subset A,
    \end{align*}
    and choose any countable dense subset $S_{k+1}=\{x_1^{k+1},x_2^{k+1},\ldots\}\subset D_{k+1}$.

    Set $D:=\overline{\bigcup_k D_k}\subset A$, which is a separable $C^*$-algebra containing $C$.
    Let us check that $\widetilde{d}(D)\leq n$, which means that we have to show that $D$ is approximated by sub-$C^*$-algebras $B\in\mathcal{C}$ with $d(B)\leq n$.

    Note that $\{x_i^j\}_{i,j\geq 1}$ is dense in $D$.
    Thus, if a finite subset $F\subset D$, and $\varepsilon>0$ is given, we may find $k$ such that $F\subset_{\varepsilon/2}\{x_i^j \: |\: i,j\leq k\}$, and we may assume $k>2/\varepsilon$.
    By construction, $D$ contains the sub-$C^*$-algebra $B:=A(\{x_i^j \: |\: i,j\leq k\},1/k)$, which satisfies $d(B)\leq n$ and $\{x_i^j \: |\: i,j\leq k\}\subset_{1/k} B$.
    Then $F\subset_\varepsilon B$, which completes the proof that $\widetilde{d}(D)\leq n$.

    Lastly, assume $\mathcal{C}$ is closed under stable isomorphism, and assume $d(A)=d(A\otimes\mathbb{K})$ for every separable $A\in\mathcal{C}$.
    This implies the following:
    If $A$ is a separable $C^*$-algebra in $\mathcal{C}$, and $B\subset A$ is a hereditary sub-$C^*$-algebra, then $B$ lies in $\mathcal{C}$ and $d(B)\leq d(A)$.

    We want to check condition $(3)$ of \autoref{HTprp:NCDimThy:TFAE_Morita} for $\widetilde{d}$.
    Thus, let a separable $C^*$-algebra $A$ be given.
    We need to check $\widetilde{d}(A)=\widetilde{d}(A\otimes\mathbb{K})$.

    If $\widetilde{d}(A)=\infty$, then clearly $\widetilde{d}(A\otimes\mathbb{K})\leq\widetilde{d}(A)$.
    So assume $n:=\widetilde{d}(A)<\infty$, which means that $A$ is approximated by algebras $A_i\subset A$ with $d(A_i)\leq n$.
    Then $A\otimes\mathbb{K}$ is approximated by the subalgebras $A_i\otimes\mathbb{K}\subset A\otimes\mathbb{K}$, and $d(A_i\otimes\mathbb{K})=d(A_i)\leq n$ by assumption.
    Then $\widetilde{d}(A\otimes\mathbb{K})\leq n=\widetilde{d}(A)$.

    Conversely, if $\widetilde{d}(A\otimes\mathbb{K})=\infty$, then $\widetilde{d}(A)\leq\widetilde{d}(A\otimes\mathbb{K})$.
    So assume $n:=\widetilde{d}(A\otimes\mathbb{K})<\infty$, which means that $A\otimes\mathbb{K}$ is approximated by algebras $A_i\subset A$ with $d(A_i)\leq n$.
    Consider the hereditary sub-$C^*$-algebra $A\otimes e_{1,1}\subset A\otimes\mathbb{K}$, which is isomorphic to $A$.
    By \autoref{HTprp:NCDimThy:ApproxHer}, $A\otimes e_{1,1}$ is approximated by subalgebras $B_j\subset A\otimes e_{1,1}$ such that each $B_j$ is isomorphic to a hereditary sub-$C^*$-algebras of $A_i$, for some $i=i(j)$.
    It follows $d(B_j)\leq n$, and then $\widetilde{d}(A) =\widetilde{d}(A\otimes e_{1,1})\leq n=\widetilde{d}(A\otimes\mathbb{K})$.
    Together we get $\widetilde{d}(A)=\widetilde{d}(A\otimes\mathbb{K})$, as desired.
\end{proof}

%##########################################################################################
%##########################################################################################
%##########################################################################################
\section{Topological dimension} %\blank \\
\label{HTsec:topdim}

%------------------------------------------------------------------------------------------
\noindent
    One could try to define a dimension theory by simply considering the dimension of the primitive ideal space of a $C^*$-algebra.
    This will, however, run into problems if the primitive ideal space is not Hausdorff.
    Brown and Pedersen, \cite{BroPed2009}, suggested a way of dealing with this problem by restricting to (locally closed) Hausdorff subsets of $\Prim(A)$, and taking the supremum over the dimension of these Hausdorff subsets.
    This defines the topological dimension of a $C^*$-algebra, see \autoref{HTdfn:topdim:topdim}.

    In this section we will show that the topological dimension is a dimension theory in the sense of \autoref{HTdfn:NCDimThy:ncDimThy} for the class of type I $C^*$-algebras.
    It follows from the work of Brown and Pedersen that axioms (D1)-(D4) are satisfied, and we verify axiom (D5) in \autoref{HTprp:topdim:approx_GCR}.
    We use transfinite induction over the length of a composition series of the type I $C^*$-algebra to verify axiom (D6), see \autoref{HTprp:topdim:sep_subalg_GCR-ideal}.

    See \ref{HTpgr:prelim:typeI} for a short reminder on type I $C^*$-algebras.
    For more details, we refer the reader to Chapter IV.1 of Blackadar's book, \cite{Bla2006}, and Chapter 6 of Pedersen's book, \cite{Ped1979}.

%------------------------------------------------------------------------------------------
\begin{dfn}[{Brown, Pedersen, \cite[2.2 (iv)]{BroPed2007}}]
     Let $X$ be a topological space.
     We define:
     \begin{enumerate}[(1)   ]
        \item
        A subset $C\subset X$ is called \termdef{locally closed} if there is a closed set $F\subset X$ and an open set $G\subset X$ such that $C=F\cap G$.
        \item
        $X$ is called \termdef{almost Hausdorff} if every non-empty closed subset $F$ contains a non-empty relatively open subset $F\cap G$ (so $F\cap G$ is locally closed in $X$) which is Hausdorff.
     \end{enumerate}
\end{dfn}

%------------------------------------------------------------------------------------------
\begin{pgr}
\label{HTpgr:topdim:typeI_almT2}
    We could consider locally closed subsets as ``well-placed'' subsets.
    Then, being almost Hausdorff means having enough ``well-placed'' Hausdorff subsets.

    For a $C^*$-algebra $A$, the locally closed subsets of $\Prim(A)$ correspond to ideals of quotients of $A$ (equivalently to quotients of ideals of $A$) up to canonical isomorphism, see \cite[2.2(iii)]{BroPed2007}.
    Therefore, the primitive ideal space of every type I $C^*$-algebra is almost Hausdorff, since every non-zero quotient contains a non-zero ideal that has continuous trace, see \cite[Theorem 6.2.11, p. 200]{Ped1979}, and the primitive ideal space of a continuous trace $C^*$-algebra is Hausdorff.
\end{pgr}

%------------------------------------------------------------------------------------------
\begin{dfn}[{Brown, Pedersen, \cite[2.2(v)]{BroPed2007}}]
\label{HTdfn:topdim:topdim}
\index{Topological dimension}
    Let $A$ be a $C^*$-algebra.
    If $\Prim(A)$ is almost Hausdorff, then the \termdef{topological dimension} of $A$, denoted by $\topdim(A)$, is:
    \begin{align} \setcounter{equation}{0}
        \topdim(A)  &:= \sup\{\locdim(S) \: |\: S\subset\Prim(A) \text{ locally closed, Hausdorff} \}.
    \end{align}
\end{dfn}

%------------------------------------------------------------------------------------------
\noindent
    We will now show that the topological dimension satisfies the axioms of \autoref{HTdfn:NCDimThy:ncDimThy}.
    The following result immediately implies (D1)-(D4).

%------------------------------------------------------------------------------------------
\begin{prp}[{Brown, Pedersen, \cite[Proposition 2.6]{BroPed2007}}]
\label{HTprp:topdim:compSeries}
    Let $(J_\alpha)_{\alpha\leq \mu}$ be a composition series for a $C^*$-algebra $A$.
    Then $\Prim(A)$ is almost Hausdorff if and only if $\Prim(J_{\alpha+1}/J_\alpha)$ is almost Hausdorff for each $\alpha<\mu$, and if this is the case, then:
    \begin{align} \setcounter{equation}{0}
        \topdim(A)
            &=\sup_{\alpha<\mu}\topdim(J_{\alpha+1}/J_\alpha).
    \end{align}
\end{prp}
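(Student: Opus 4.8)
The plan is to reduce everything to two basic facts: first, that locally closed subsets of $\Prim(A)$ correspond to quotients of ideals of $A$ (see \ref{HTpgr:topdim:typeI_almT2}), and second, that for a locally compact space the local covering dimension passes to open subsets (see \ref{HTpgr:prelim:covering_dimension}, via Dowker's \cite[4.1]{Dow1955}). Given these, I would treat the two directions of the ``if and only if'' separately, and then the dimension formula.

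For the equivalence statement, the key observation is that $\Prim(J_{\alpha+1}/J_\alpha)$ is canonically homeomorphic to the locally closed subset $U_{\alpha+1}\setminus U_\alpha$ of $\Prim(A)$, where $U_\beta\subset\Prim(A)$ is the open subset corresponding to the ideal $J_\beta$. So if $\Prim(A)$ is almost Hausdorff, then each subspace $U_{\alpha+1}\setminus U_\alpha$ inherits this property (a locally closed subspace of an almost Hausdorff space is almost Hausdorff, since closed subsets of the subspace are traces of closed subsets of the whole space, and the Hausdorff locally closed piece one finds inside can be intersected down). Conversely, suppose every $\Prim(J_{\alpha+1}/J_\alpha)$ is almost Hausdorff. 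Given a nonempty closed subset $F\subset\Prim(A)$, let $\alpha$ be the least ordinal with $F\cap U_{\alpha+1}\neq\emptyset$; by the limit-ordinal condition (ii) in the definition of composition series and the fact that $U_\beta=\bigcup_{\gamma<\beta}U_\gamma$ for limit $\beta$, such a least $\alpha$ exists and $F\cap U_\alpha=\emptyset$, so $F\cap U_{\alpha+1}$ is a nonempty closed subset of $U_{\alpha+1}\setminus U_\alpha\cong\Prim(J_{\alpha+1}/J_\alpha)$, hence contains a nonempty relatively open Hausdorff piece, which is then a locally closed Hausdorff subset of $F$. This shows $\Prim(A)$ is almost Hausdorff.

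For the dimension formula, assuming now $\Prim(A)$ is almost Hausdorff, I would argue both inequalities. The inequality $\topdim(A)\geq\sup_\alpha\topdim(J_{\alpha+1}/J_\alpha)$ is the easier one: any locally closed Hausdorff subset $S$ of $\Prim(J_{\alpha+1}/J_\alpha)$, viewed inside $U_{\alpha+1}\setminus U_\alpha\subset\Prim(A)$, is again locally closed (a locally closed subset of a locally closed subset is locally closed) and Hausdorff, so $\locdim(S)\leq\topdim(A)$; taking suprema gives the bound. For the reverse inequality, let $S\subset\Prim(A)$ be locally closed and Hausdorff, say $S=F\cap G$ with $F$ closed, $G$ open. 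The sets $S\cap(U_{\alpha+1}\setminus U_\alpha)$, as $\alpha$ ranges over ordinals $<\mu$, partition $S$, and each is locally closed in $\Prim(J_{\alpha+1}/J_\alpha)$ and Hausdorff, so $\locdim\big(S\cap(U_{\alpha+1}\setminus U_\alpha)\big)\leq\topdim(J_{\alpha+1}/J_\alpha)$. The point is then to assemble a bound on $\locdim(S)$ from the bounds on the pieces of this transfinite stratification. The natural tool is that $S\cap U_{\alpha+1}$ is open in $S$, that $S\cap(U_{\alpha+1}\setminus U_\alpha)$ is closed in $S\cap U_{\alpha+1}$, and a transfinite induction: one shows by induction on $\beta$ that $\locdim(S\cap U_\beta)\leq\sup_{\alpha<\beta}\topdim(J_{\alpha+1}/J_\alpha)$, using at successor steps a countable (or here, transfinite) sum theorem for local covering dimension together with the behaviour of $\locdim$ under open and closed subsets, and at limit steps the fact that $S\cap U_\beta=\bigcup_{\gamma<\beta}(S\cap U_\gamma)$ is an increasing union of open subsets (here one uses that $\locdim$ of a locally compact space is determined locally, so an increasing open cover does not raise it). Since this is precisely \cite[Proposition 2.6]{BroPed2007}, I would cite Brown–Pedersen for the details of this last bookkeeping and present the argument above as the structure.

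The main obstacle is the reverse dimension inequality, specifically the transfinite sum/union argument for $\locdim$. Covering dimension is well-behaved under closed sum theorems only with care (the classical sum theorem requires closed covers, or local finiteness, or normality), and $\locdim$ for non-normal spaces is exactly where the subtleties of \cite{Dow1955} live; moreover one is summing over a possibly uncountable ordinal. This is why the cleanest route is to lean on the cited Brown–Pedersen proposition for that step, rather than reprove the transfinite sum theorem here.
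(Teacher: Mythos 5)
The paper does not prove this proposition at all: it is quoted directly from Brown--Pedersen \cite[Proposition 2.6]{BroPed2007}, so there is no internal argument to compare yours against, and a proposal that argues part of the statement and cites the same source for the rest is at least as complete as the text itself. The parts you do argue are sound: the identification of $\Prim(J_{\alpha+1}/J_\alpha)$ with the locally closed stratum $U_{\alpha+1}\setminus U_\alpha$, the passage of almost Hausdorffness to subspaces (one small point to tighten: for a closed subset $F'$ of the subspace, take the ambient closed set to be the closure $\overline{F'}$ in $\Prim(A)$, so that density of $F'$ in $\overline{F'}$ guarantees the Hausdorff relatively open piece you ``intersect down'' is nonempty), the least-stratum argument for the converse direction, and the inequality $\topdim(A)\geq\sup_\alpha\topdim(J_{\alpha+1}/J_\alpha)$. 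For the reverse inequality your worry about a transfinite sum theorem is overstated: at a successor step one only needs the binary fact that a locally compact Hausdorff space written as an open subset together with its closed complement has $\locdim$ at most the maximum of the two pieces (Dowker-type, and the compact neighbourhoods reduce it to the finite closed sum theorem), while limit steps cost nothing because $\locdim$ is a local notion and $S\cap U_\lambda$ is the increasing union of the open sets $S\cap U_\gamma$; so the induction you outline does close, and deferring that bookkeeping to \cite{BroPed2007} is precisely what the paper does for the whole proposition.
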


%------------------------------------------------------------------------------------------
\noindent
    The following result is implicit in the papers of Brown and Pedersen, e.g. \cite[Theorem 5.6]{BroPed2009}.

%------------------------------------------------------------------------------------------
\begin{prp}
\label{HTprp:topdim:her}
    Let $A$ be a $C^*$-algebra, and let $B\subset A$ be a hereditary sub-$C^*$-algebra.
    If $\Prim(A)$ is locally Hausdorff, then so is $\Prim(B)$, and then $\topdim(B)\leq\topdim(A)$.
    If $B$ is even full hereditary, then $\topdim(B)=\topdim(A)$.
\end{prp}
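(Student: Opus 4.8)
The plan is to reduce the statement to the case where $B$ is \emph{full} hereditary, and to treat that case via Morita equivalence. Write $J:=\overline{ABA}\lhd A$ for the closed ideal of $A$ generated by $B$. The first step is to record the (standard) fact that $B$ is a full, hereditary sub-$C^*$-algebra of $J$: indeed $B\subseteq J$ since $B=\overline{BBB}$; $B$ is hereditary in $J$ because heredity passes to any intermediate subalgebra (if $0\leq x\leq b$ with $b\in B$ and $x\in J\subseteq A$, then $x\in B$); and $B$ generates $J$ as an ideal of $J$, i.e.\ $\overline{JBJ}=J$, which one verifies from the inclusions $AB,BA\subseteq J$ together with an approximate unit for $B$.

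Next I would apply \autoref{HTprp:topdim:compSeries} to the two-term composition series $0\lhd J\lhd A$, whose successive quotients are $J$ and $A/J$. This shows that $\Prim(A)$ is almost Hausdorff if and only if both $\Prim(J)$ and $\Prim(A/J)$ are, and that in this case $\topdim(A)=\max\{\topdim(J),\topdim(A/J)\}$. Under the hypothesis on $A$ we thus obtain that $\Prim(J)$ is almost Hausdorff and $\topdim(J)\leq\topdim(A)$. Consequently it is enough to prove the proposition when $B$ is full hereditary in $J$: this gives that $\Prim(B)$ is almost Hausdorff with $\topdim(B)=\topdim(J)\leq\topdim(A)$, and if $B$ was full hereditary in $A$ to begin with then $J=A$, so we recover $\topdim(B)=\topdim(A)$.

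For the full hereditary case, a full hereditary sub-$C^*$-algebra $B$ of $J$ is Morita equivalent to $J$ (via the imprimitivity bimodule $\overline{BJ}$), so the Rieffel correspondence provides an order isomorphism between the lattices of closed ideals of $B$ and of $J$, hence a homeomorphism $\Prim(B)\cong\Prim(J)$ for the hull-kernel topologies; see \cite{Bla2006}. Since both the property of having almost Hausdorff primitive ideal space and the value of the topological dimension depend only on the homeomorphism type of the primitive ideal space (see \autoref{HTdfn:topdim:topdim}), it follows immediately that $\Prim(B)$ is almost Hausdorff whenever $\Prim(J)$ is and that then $\topdim(B)=\topdim(J)$. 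Together with the preceding paragraph this completes the proof.

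I do not expect a serious obstacle here: the argument is essentially bookkeeping on top of \autoref{HTprp:topdim:compSeries} and the standard identification of the primitive ideal space of a full hereditary subalgebra with that of its ambient algebra. The one step warranting a moment's care is the very first one — checking that $B$ is full hereditary in $\overline{ABA}$ — since this is exactly what makes the reduction legitimate; but it is a well-known lemma whose verification is short.
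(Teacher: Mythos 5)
Your argument is correct, and there is no circularity: \autoref{HTprp:topdim:compSeries} is an imported Brown--Pedersen result that does not rest on the present proposition. The skeleton agrees with the paper's proof — both reduce to the ideal $J=\overline{ABA}$ generated by $B$ and identify $\Prim(B)$ with $\Prim(J)$ — but the two middle steps are handled differently. Where you invoke Morita equivalence of $B$ with $J$ and the Rieffel correspondence to get $\Prim(B)\cong\Prim(J)$, the paper simply quotes the standard fact that $\Prim(B)$ is canonically homeomorphic to $\Prim(J)$, viewed as an open subset of $\Prim(A)$; these are the same fact in different clothing. More substantively, where you obtain almost-Hausdorffness of $\Prim(J)$ and the inequality $\topdim(J)\leq\topdim(A)$ by applying \autoref{HTprp:topdim:compSeries} to the two-term composition series $0\lhd J\lhd A$, the paper argues directly from \autoref{HTdfn:topdim:topdim}: almost-Hausdorffness passes to locally closed (in particular open) subsets, and every locally closed Hausdorff subset of the open set $\Prim(J)\subset\Prim(A)$ is still locally closed and Hausdorff in $\Prim(A)$, so the supremum defining $\topdim(B)$ is dominated by the one defining $\topdim(A)$. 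The paper's route is more elementary and self-contained (it needs only the definition plus elementary point-set topology), while yours buys brevity by outsourcing the monotonicity to the composition-series formula and the homeomorphism to Morita theory; your reduction via $B$ full hereditary in $J$ is standard and correctly verified, and the full case ($J=A$) comes out the same way in both arguments.
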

\begin{proof}
    In general, if $B\subset A$ is a hereditary sub-$C^*$-algebra, then $\Prim(B)$ is homeomorphic to an open subset of $\Prim(A)$.
    In fact, $\Prim(B)$ is canonically homeomorphic to the primitive ideal space of the ideal generated by $B$, and this corresponds to an open subset of $\Prim(A)$.

    Note that being locally Hausdorff is a property that passes to locally closed subsets, and so it passes from $\Prim(A)$ to $\Prim(B)$.
    Further, every locally closed, Hausdorff subset $S\subset\Prim(B)$ is also locally closed (and Hausdorff) in $\Prim(A)$.
    It follows $\topdim(B)\leq\topdim(A)$.

    If $B$ is full, then $\Prim(B)\cong\Prim(A)$ and therefore $\topdim(B)=\topdim(A)$.
\end{proof}

%------------------------------------------------------------------------------------------
\begin{lma}
\label{HTprp:topdim:lma_approx_CT}
    Let $A$ be a continuous trace $C^*$-algebra, and let $n\in\mathbb{N}$.
    If $A$ is approximated by sub-$C^*$-algebras with topological dimension at most $n$, then $\topdim(A)\leq n$.
\end{lma}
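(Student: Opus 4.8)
The plan is to first reduce the statement to a purely topological one, namely $\locdim(\Prim(A))\le n$, and then to prove that by localizing $A$ to commutative corners and invoking the commutative case, \autoref{HTprp:NCDimThy:commutative}. For the reduction, note that since $A$ has continuous trace, $\Prim(A)=\widehat A$ is locally compact Hausdorff; it is itself locally closed and Hausdorff in $\Prim(A)$, so $\locdim(\Prim(A))\le\topdim(A)$, while any locally closed, Hausdorff subset $S=F\cap G$ is closed in the open set $G$, whence $\locdim(S)\le\locdim(G)\le\locdim(\Prim(A))$ by monotonicity of $\locdim$ under passage to closed subspaces and to open subspaces of locally compact spaces (see \ref{HTpgr:prelim:covering_dimension}). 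Thus $\topdim(A)=\locdim(\Prim(A))$, and it suffices to show that every point of $\widehat A$ has a compact neighborhood of covering dimension at most $n$. Also recall that, $\widehat A$ being Hausdorff, $A$ is a $C_0(\widehat A)$-algebra (Dauns--Hofmann).

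Next I would localize. Fix $\pi_0\in\widehat A$. By Fell's condition (\ref{HTpgr:prelim:CT}) there is an open neighborhood $V$ of $\pi_0$ and $a\in A_+$ with $\pi(a)$ a rank-one projection for every $\pi\in V$. Using local compactness and Urysohn's lemma, choose a compact neighborhood $\overline U$ of $\pi_0$ with $\overline U\subset V$ and a function $f\in C_0(\widehat A)_+$ with $f\equiv 1$ on $\overline U$ and $\operatorname{supp}f\subset V$, and replace $a$ by $f\cdot a\in A_+$. Then $W:=\{\pi\in\widehat A:\pi(a)\ne 0\}$ satisfies $\overline U\subset W\subset V$, and for every $\pi\in W$ the fibre $\pi(\overline{aAa})=\overline{\pi(a)\pi(A)\pi(a)}=\mathbb{C}\,\pi(a)$ is one-dimensional, using $\pi(A)\supseteq\mathbb{K}(H_\pi)$ and that $\pi(a)$ is a positive scalar multiple of a rank-one projection. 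Hence $B:=\overline{aAa}$ is a \Cs{} all of whose irreducible representations are one-dimensional, so $B$ is commutative; as its spectrum is the open set $W$, we get $B\cong C_0(W)$.

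Now I would transfer the approximation. Since $B$ is a hereditary sub-\Cs{} of $A$, \autoref{HTprp:NCDimThy:ApproxHer} shows that $B$ is approximated by sub-\Cs s $B'\subset B$, each isomorphic to a hereditary sub-\Cs{} of some $A_i$. Each $A_i$, being a sub-\Cs{} of the type I algebra $A$, is type I, so $\Prim(A_i)$ is almost Hausdorff, in particular locally Hausdorff (cf.\ \ref{HTpgr:topdim:typeI_almT2}), and \autoref{HTprp:topdim:her} gives $\topdim(B')\le\topdim(A_i)\le n$. But $B'\subset B\cong C_0(W)$ is commutative, so $B'\cong C_0(Y')$ with $\locdim(Y')=\topdim(B')\le n$. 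Thus $C_0(W)$ is approximated by commutative sub-\Cs s on which the dimension theory $C_0(\cdot)\mapsto\locdim(\Prim(\cdot))$ of \autoref{HTprp:NCDimThy:commutative} takes value at most $n$, so axiom (D5) yields $\locdim(W)\le n$. Consequently $\dim(\overline U)=\locdim(\overline U)\le\locdim(W)\le n$, since $\overline U$ is compact and closed in $W$; so $\overline U$ is a compact neighborhood of $\pi_0$ of covering dimension at most $n$. As $\pi_0$ was arbitrary, $\locdim(\widehat A)\le n$, i.e.\ $\topdim(A)\le n$.

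The main obstacle is the localization step: one must use the structure theory of continuous trace \Cs s (Fell's condition together with the $C_0(\widehat A)$-module structure) to produce, around each point of the spectrum, a hereditary sub-\Cs{} that is commutative with prescribed open spectrum, so that the approximation of $A$ can be pushed down — via \autoref{HTprp:NCDimThy:ApproxHer} — to an approximation of a commutative \Cs{} and the commutative case applies. A minor but essential point is that the sub-\Cs s furnished by \autoref{HTprp:NCDimThy:ApproxHer}, being sub-\Cs s of the commutative algebra $B$, are themselves commutative, which is what allows us to invoke \autoref{HTprp:NCDimThy:commutative} rather than only the (here insufficient) inequality $\topdim(B')\le n$.
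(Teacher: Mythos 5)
Your proof is correct and follows essentially the same route as the paper: localize via Fell's condition to a commutative piece of $A$, transfer the approximation to it via \autoref{HTprp:NCDimThy:ApproxHer} and \autoref{HTprp:topdim:her}, and conclude with the commutative case \autoref{HTprp:NCDimThy:commutative}. The only (harmless) variation is that you cut down to the hereditary subalgebra $\overline{(fa)A(fa)}\cong C_0(W)$ inside $A$ itself, whereas the paper first passes to the quotient over a compact neighborhood, where $a$ becomes a full abelian projection $p$, and then works with the corner $pAp\cong C(X)$.
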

\begin{proof}
    Since $\Prim(A)$ is Hausdorff, we have $\topdim(A)=\locdim(\Prim(A))$.
    Thus, it is enough to show that every $x\in\Prim(A)$ has a neighborhood $U$ with $\dim(U)\leq n$.
    This will allow us to reduce the problem to the situation that $A$ has a global rank-one projection, i.e., that there exists a full, abelian projection $p\in A$, see \cite[IV.1.4.20, p.335]{Bla2006}, which we do as follows:

    Let $x\in\Prim(A)$ be given.
    Since $A$ has continuous trace, there exists an open neighborhood $U\subset\Prim(A)$ of $x$ and an element $a\in A_+$ such that $\rho(a)$ is a rank-one projection for every $\rho\in U$, see \ref{HTpgr:prelim:CT}.
    Then there exists a closed, compact neighborhood $Y\subset\Prim(A)$ of $x$ that is contained in $U$.
    Let $J\lhd A$ be the ideal corresponding to $\Prim(A)\setminus Y$.
    The image of $a$ in the quotient $A/J$ is a full, abelian projection.
    Since $A$ is approximated by subalgebras $B\subset A$ with $\topdim(B)\leq n$, $A/J$ is approximated by the subalgebras $B/(B\cap J)$ with $\topdim(B/(B\cap J))\leq\topdim(B)\leq n$.
    If we can show that this implies $\dim(Y)=\topdim(A/J)\leq n$, then every point of $\Prim(A)$ has a closed neighborhood of dimension $\leq n$, which means $\topdim(A)=\locdim(\Prim(A))\leq n$.

    We assume from now on that $A$ has continuous trace with a full, abelian projection $p\in A$.
    Thus, $pAp\cong C(X)$ where $X:=\Prim(A)$ is a compact, Hausdorff space.
    Assume $A$ is approximated by subalgebras $A_i\subset A$ with $\topdim(A_i)\leq n$.
    It follows from \autoref{HTprp:NCDimThy:ApproxHer} that the hereditary sub-$C^*$-algebra $pAp$ is approximated by subalgebras $B_j$ such that each $B_j$ is isomorphic to a hereditary sub-$C^*$-algebra of $A_i$, for some $i=i(j)$.
    By \autoref{HTprp:topdim:her}, $\topdim(B_j)\leq\topdim(A_{i(j)})\leq n$ for each $j$.

    Thus, $C(X)$ is approximated by commutative subalgebras $C(X_j)$ with $\dim(X_j)=\topdim(C(X_j))\leq n$.
    It follows from \autoref{HTprp:NCDimThy:commutative} that $\dim(X)\leq n$, as desired.
\end{proof}

%------------------------------------------------------------------------------------------
\begin{prp}
\label{HTprp:topdim:approx_GCR}
    Let $A$ be a type I $C^*$-algebra, and let $n\in\mathbb{N}$.
    If $A$ is approximated by sub-$C^*$-algebras with topological dimension at most $n$, then $\topdim(A)\leq n$.
\end{prp}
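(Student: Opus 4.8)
The plan is to reduce the statement about a general type I $C^*$-algebra $A$ to the continuous-trace case handled in \autoref{HTprp:topdim:lma_approx_CT}, using the composition-series formula of \autoref{HTprp:topdim:compSeries}. First I would fix a composition series $(J_\alpha)_{\alpha\leq\mu}$ of $A$ whose successive quotients $J_{\alpha+1}/J_\alpha$ have continuous trace, which exists since $A$ is type I (see \ref{HTpgr:prelim:typeI}). By \autoref{HTprp:topdim:compSeries} we have $\topdim(A)=\sup_{\alpha<\mu}\topdim(J_{\alpha+1}/J_\alpha)$, so it suffices to show $\topdim(J_{\alpha+1}/J_\alpha)\leq n$ for every $\alpha<\mu$.

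The key step is that the approximation property descends to each successive quotient. Since $A$ is approximated by subalgebras $A_i\subset A$ with $\topdim(A_i)\leq n$, the ideal $J_\alpha$ is approximated by $\{A_i\cap J_\alpha\}$, and hence $J_{\alpha+1}$ is approximated by $\{A_i\cap J_{\alpha+1}\}$; passing to the quotient, $J_{\alpha+1}/J_\alpha$ is approximated by the subalgebras $(A_i\cap J_{\alpha+1})/(A_i\cap J_\alpha)$, see \ref{HTpgr:prelim:approximation}(2). Now each $A_i\cap J_{\alpha+1}$ is an ideal in $A_i$, so by (D1)--(D2) — which hold for $\topdim$ by \autoref{HTprp:topdim:compSeries}, or directly — we have $\topdim\big((A_i\cap J_{\alpha+1})/(A_i\cap J_\alpha)\big)\leq\topdim(A_i)\leq n$. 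Thus $J_{\alpha+1}/J_\alpha$ is a continuous-trace $C^*$-algebra that is approximated by subalgebras of topological dimension at most $n$, and \autoref{HTprp:topdim:lma_approx_CT} yields $\topdim(J_{\alpha+1}/J_\alpha)\leq n$. Taking the supremum over $\alpha$ gives $\topdim(A)\leq n$.

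One point that needs a little care at the start is ensuring that $\Prim(A)$ is almost Hausdorff so that $\topdim(A)$ is even defined; but this is automatic for type I $C^*$-algebras, see \autoref{HTpgr:topdim:typeI_almT2}, and it is also guaranteed a posteriori by \autoref{HTprp:topdim:compSeries} once we know each $\Prim(J_{\alpha+1}/J_\alpha)$ is Hausdorff. The main obstacle is really the continuous-trace case, which is already dispatched in \autoref{HTprp:topdim:lma_approx_CT}; the present argument is then just a transfinite bookkeeping step assembling that local information along the composition series, and the only mild subtlety is checking that subalgebra-approximation behaves well with respect to ideals and quotients simultaneously — which is exactly the content of \ref{HTpgr:prelim:approximation}(2).
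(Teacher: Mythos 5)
Your proposal is correct and follows essentially the same route as the paper: fix a composition series with continuous-trace successive quotients, pass the approximation to each quotient $J_{\alpha+1}/J_\alpha$ via \ref{HTpgr:prelim:approximation}(2), apply \autoref{HTprp:topdim:lma_approx_CT} there, and conclude with the supremum formula of \autoref{HTprp:topdim:compSeries}. The only difference is that you spell out the justification $\topdim\big((A_i\cap J_{\alpha+1})/(A_i\cap J_\alpha)\big)\leq\topdim(A_i)$ via (D1)--(D2), which the paper leaves implicit.
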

\begin{proof}
    Let $(J_\alpha)_{\alpha\leq \mu}$ be a composition series for $A$ such that each successive quotient has continuous trace, and assume $A$ is approximated by subalgebras $A_i\subset A$ with $\topdim(A_i)\leq n$.

    Then $J_{\alpha+1}/J_\alpha$ is approximated by the subalgebras $(A_i\cap J_{\alpha+1})/(A_i\cap J_\alpha)$, see \ref{HTpgr:prelim:approximation}.
    Since $\topdim((A_i\cap J_{\alpha+1})/(A_i\cap J_\alpha))\leq\topdim(A_i)\leq n$, we obtain from the above \autoref{HTprp:topdim:lma_approx_CT} that $\topdim(J_{\alpha+1}/J_\alpha)\leq n$.
    By \autoref{HTprp:topdim:compSeries}, $\topdim(A)=\sup_{\alpha<\mu}\topdim(J_{\alpha+1}/J_\alpha)\leq n$, as desired.
\end{proof}

%------------------------------------------------------------------------------------------
\begin{rmk}
    It is noted in \cite[Remark 2.5(v)]{BroPed2007} that a weaker version of \autoref{HTprp:topdim:approx_GCR} would follow from \cite{Sud2004}.
    However, the statement is formulated as an axiom there, and it is not clear that the formulated axioms are consistent and give a dimension theory that agrees with the topological dimension.
\end{rmk}

%------------------------------------------------------------------------------------------
\noindent
    We will now prove that the topological dimension of type I $C^*$-algebras satisfies the Marde\v{s}i\'c factorization axiom (D6).
    We start with two lemmas.

%------------------------------------------------------------------------------------------
\begin{lma}
\label{HTprp:topdim:lma_sep_subalg_CT}
    Let $A$ be a continuous trace $C^*$-algebra, and let $C\subset A$ be a separable sub-$C^*$-algebra.
    Then there exists a separable, continuous trace sub-$C^*$-algebra $D\subset A$ that contains $C$, and such that the inclusion $C\subset D$ is proper, and $\topdim(D)\leq\topdim(A)$.
\end{lma}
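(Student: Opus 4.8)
The plan is as follows. Write $X := \widehat{A} = \Prim(A)$; since $A$ has continuous trace, $X$ is locally compact Hausdorff and $\topdim(A) = \locdim(X) =: n$ (we allow $n=\infty$). We may assume that $C$ is a proper sub-$C^*$-algebra of $A$, and we fix $a_0 \in A\setminus C$ that all algebras constructed below are required to contain, which makes the inclusion $C\subset D$ proper. Recall that by the Dauns--Hofmann isomorphism $A$ is a $C_0(X)$-algebra via a non-degenerate inclusion $C_0(X)\hookrightarrow ZM(A)$, and that by Fell's condition (see \ref{HTpgr:prelim:CT}) for every $x\in X$ there are $a\in A_+$ and an open neighbourhood $U$ of $x$ with $\rho(a)$ a rank-one projection for all $\rho\in U$. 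Multiplying $a$ by a suitable function in $C_c(X)\subset C_0(X)\subset ZM(A)$, we obtain $h_x\in A_+$ with compact support in $X$ such that $\rho(h_x)$ has rank at most one for all $\rho\in X$ and $\rho(h_x)\ne 0$ for $\rho$ near $x$. Then $H_x := \overline{h_x A h_x}$ is a hereditary sub-$C^*$-algebra of $A$ all of whose fibres are at most one-dimensional; hence $H_x$ is commutative, and since $\widehat{H_x}$ is the open set $V_x := \{\rho\in X : \rho(h_x)\ne 0\}$, we have $H_x\cong C_0(V_x)$ with $\locdim(V_x)\le\locdim(X)=n$ by \cite[4.1]{Dow1955}; moreover $\{V_x\}_{x\in X}$ covers $X$ because $x\in V_x$.

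Next I would construct an increasing sequence of separable sub-$C^*$-algebras $C^*(C,a_0) = D_1\subseteq D_2\subseteq\cdots\subseteq A$ so that $D := \overline{\bigcup_k D_k}$ has continuous trace with $\topdim(D)\le n$. At the $k$-th stage, having fixed a countable dense subset $S_k\subseteq D_k$, one chooses $D_{k+1}\subseteq A$ separable, containing $D_k$, and large enough to also contain the following countably many elements of $A$: (i) further elements $h_x$, chosen so that the totality of the $h_x$ adjoined over all stages generates $D$ as a closed ideal (using that the $h_x$, $x\in X$, generate a dense ideal of $A$); (ii) for each $h_x$ adjoined so far, enough of the commutative algebra $H_x\cong C_0(V_x)$ that $D\cap H_x$ is commutative with spectrum of local covering dimension at most $n$ -- here one applies Marde\v{s}i\'{c}'s factorization theorem, i.e.\ axiom (D6) for the commutative dimension theory $A'\mapsto\locdim(\Prim A')$ of \autoref{HTprp:NCDimThy:commutative}, to $H_x=C_0(V_x)$ and the separable subalgebra $D_k\cap H_x$, and adjoins the resulting separable subalgebra of $H_x$; (iii) a countable set of central elements of $C_0(X)\subseteq ZM(A)$ chosen so that in the limit they force $\Prim(D)$ to be Hausdorff. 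Since each item requires only countably many elements, such a separable $D_{k+1}$ exists.

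Finally, with $D := \overline{\bigcup_k D_k}$ one argues as follows. For each adjoined $x$, the subalgebra $D\cap H_x$ is hereditary in $D$ (because $H_x$ is hereditary in $A$) and hence full hereditary in the ideal $D_x$ of $D$ it generates; by (ii) it is commutative with $\locdim(\widehat{D\cap H_x})\le n$, so $D_x$ is Morita equivalent to $D\cap H_x$ and therefore has continuous trace with $\topdim(D_x)=\locdim(\widehat{D\cap H_x})\le n$, using that continuous trace is Morita invariant together with \autoref{HTprp:topdim:her}. By (i) the open sets $\widehat{D_x}\subseteq\Prim(D)$ cover $\Prim(D)$, so $\Prim(D)$ is locally Hausdorff and every point of it has a compact neighbourhood of covering dimension at most $n$; combined with (iii), $\Prim(D)$ is Hausdorff, so $D$ has continuous trace and $\topdim(D)=\locdim(\Prim(D))\le n=\topdim(A)$, while $C\subsetneq C^*(C,a_0)\subseteq D\subseteq A$. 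The main obstacle is the bookkeeping in the middle step: one must interleave the countably many applications of Marde\v{s}i\'{c}'s theorem over the charts $H_x$ with the choices in (i) and (iii) so that the limit algebra $D$ is \emph{genuinely} of continuous trace. Since continuous trace is not inherited by arbitrary sub-$C^*$-algebras, both the Hausdorffness of $\Prim(D)$ and Fell's condition for $D$ have to be verified directly for the limit -- the latter being exactly what the local Morita equivalences $D_x\sim D\cap H_x$ deliver, the former being the job of the central elements adjoined in (iii) -- and one must check that the $h_x$ chosen in (i) really do exhaust $\Prim(D)$ in the limit.
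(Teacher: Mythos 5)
Your reading of ``the inclusion $C\subset D$ is proper'' is not the one the paper uses, and this is a genuine gap rather than a cosmetic one. In this paper ``proper'' means non-degenerate: an approximate unit of $C$ is an approximate unit of $D$ (equivalently, for every $x\in D$ and $\varepsilon>0$ there is $c\in C$ with $\|cxc-x\|<\varepsilon$). This is exactly the property consumed later: in \autoref{HTprp:topdim:extension_for_ideal_proper} it is what makes $K$ an ideal in $C^{*}(K,C)$, and hence what makes the induction in \autoref{HTprp:topdim:sep_subalg_GCR-ideal} work. Adjoining one element $a_0\in A\setminus C$ only guarantees $C\neq D$, which is neither needed nor sufficient; worse, your charts $h_x$ and $H_x=\overline{h_xAh_x}$ are chosen all over $A$, so the limit algebra $D$ will in general not be non-degenerately contained over $C$ at all. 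The paper secures this by first replacing $A$ with the hereditary subalgebra $\overline{CAC}$ (continuous trace and the dimension bound survive by \autoref{HTprp:topdim:her}) and building $D$ inside it; your construction would need the same reduction, and you would have to check that all your charts can be taken inside $\overline{CAC}$.

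The second gap is step (iii): ``adjoin countably many central elements so that in the limit $\Prim(D)$ is Hausdorff'' is a placeholder, not an argument. Elements of $C_0(X)\subset ZM(A)$ need not lie in $A$, and, more importantly, the primitive ideal space of an arbitrary separable subalgebra $D\subset A$ is not controlled by $\Prim(A)$ in any simple way, and the pairs of points of $\Prim(D)$ that must be separated are not known before $D$ has been built; no mechanism is offered for why countably many such choices force Hausdorffness in the limit. Without (iii) your argument plausibly yields a separable \emph{type I} subalgebra with $\topdim(D)\le\topdim(A)$ (which, with properness repaired, might even suffice for the application), but not the continuous trace algebra claimed. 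Note also that in (ii) you must control all of $D\cap H_x$, not just the pieces you adjoin: subalgebras of commutative $C^*$-algebras can have higher-dimensional spectra, so you need the identification $D\cap H_x=\overline{h_xDh_x}=\overline{\bigcup_k h_xD_kh_x}$ (valid once $h_x\in D$) together with axiom (D5) for the commutative theory of \autoref{HTprp:NCDimThy:commutative}. For comparison, the paper avoids all of this local-to-global gluing: after cutting down to $\overline{CAC}$ it stabilizes, places $\Phi(C\otimes\mathbb{K})$ inside a separable $C_0(Y)\otimes\mathbb{K}$, applies Marde\v{s}i\'c's factorization once to the single map $X^+\to Y^+$ to get $C_0(Z_0)\otimes\mathbb{K}$ with $\dim Z\le\dim X$, and cuts back down by $1_{\widetilde{A}}\otimes e_{11}$, so Hausdorffness and Fell's condition for $D$ come for free from the commutative model.
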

\begin{proof}
    Let us first reduce to the case that $A$ is $\sigma$-unital, and the inclusion $C\subset A$ is proper.
    To this end, consider the hereditary sub-$C^*$-algebra $A':=CAC\subset A$.
    Since $C$ is separable, it contains a strictly positive element which is then also strictly positive in $A'$.
    Moreover, having continuous trace passes to hereditary sub-$C^*$-algebras, see \cite[Proposition 6.2.10, p.199]{Ped1979}.
    Thus, $A'$ is $\sigma$-unital and $C\subset A'$ is proper.
    Moreover, $\topdim(A')\leq\topdim(A)$ by \autoref{HTprp:topdim:her}.

    Thus, by replacing $A$ with $CAC$, we may assume from now on that $A$ is $\sigma$-unital and that the inclusion $C\subset A$ is proper.
    Set $X:=\Prim(A)$.
    By Brown's stabilization theorem, \cite[Theorem 2.8]{Bro1977}, there exists an isomorphism $\Phi\colon A\otimes\mathbb{K}\to C_0(X)\otimes\mathbb{K}$.
    Let $e_{ij}\in\mathbb{K}$ be the canonical matrix units, and consider the following $C^*$-algebra:
    \begin{align*}
        E &:=C^{*}(\bigcup_{i,j} e_{1i}\Phi(C\otimes\mathbb{K})e_{j1}) \subset C_0(X)\otimes e_{11}.
    \end{align*}
    The following diagram shows some of the $C^*$-algebras and maps that we will construct below:
    \begin{center}
        \makebox{
        \xymatrix@=20pt{
        A\otimes e_{11} \ar@{}|{\cup}[d] \ar@{}|{\subset}[r]
        & A\otimes\mathbb{K} \ar@{}|{\cup}[d] \ar[r]^<<<<<{\Phi}_<<<<<{\cong}
        & C_0(X)\otimes\mathbb{K} \ar@{}|{\cup}[d] \\
        D \ar@{}|{\cup}[d] \ar@{}|{\subset}[r]
        & \Phi^{-1}(D') \ar@{}|{\cup}[d] \ar[r]_<<<<{\cong}
        & C_0(Z_0)\otimes\mathbb{K} \ar@{}|{\cup}[d] \ar@{}|>>>>{=}[r]
        & D' \\
        C\otimes e_{11} \ar@{}|{\subset}[r]
        & C\otimes\mathbb{K} \ar[r]_<<<<<<{\cong}
        & \Phi(C\otimes\mathbb{K})
        } }
    \end{center}

    Note that $E$ is separable and commutative.
    Thus, there exists a separable sub-$C^*$-algebra $C_0(Y)\subset C_0(X)$ such that $E=C_0(Y)\otimes e_{11}$.
    We constructed $E$ such that $\Phi(C\otimes\mathbb{K})\subset C_0(Y)\otimes\mathbb{K}$.

    The inclusion $C_0(Y)\subset C_0(X)$ is induced by a pointed, continuous map $f\colon X^+\to Y^+$, see \ref{HTpgr:prelim:pointed_spaces}.
    Recall that a compact, Hausdorff space $M$ is metrizable if and only if $C(M)$ is separable.
    Thus, $Y^+$ is compact, metrizable.

    By Marde\v{s}i\'c's factorization theorem, see \cite[Corollary 27.5, p.159]{Nag1970} or \cite[Lemma 4]{Mar1960}, there exists a compact, metrizable space $Z$ with $\dim(Z)\leq\dim(X)$ and continuous (surjective) maps $g\colon X\to Z$ and $h\colon Z\to Y$ such that $f=h\circ g$.
    Set $Z_0:=Z\setminus\{g(\infty)\}$, and note that $g^{*}$ induces an embedding $C_0(Z_0)\subset C_0(X)$.
    Moreover, $C_0(Z_0)$ is separable, since $Z$ is compact, metrizable.

    Consider $D':=C_0(Z_0)\otimes\mathbb{K}$.
    We have that $D'$ is a separable, continuous trace $C^*$-algebra such that $\Phi(C\otimes\mathbb{K})\subset C_0(Y)\otimes\mathbb{K}\subset D'$, and $\topdim(D')=\dim(Z)\leq\dim(X)=\topdim(A)$.
    We think of $C$ as included in $C\otimes\mathbb{K}$ via $C\cong C\otimes e_{11}$.
    Set
    \begin{align*}
        D :=(1_{\widetilde{A}}\otimes e_{11}) (\Phi^{-1}(D')) (1_{\widetilde{A}}\otimes e_{11}),
    \end{align*}
    which is a hereditary sub-$C^*$-algebra of $\Phi^{-1}(D')\cong D'$.
    Hence, $D$ is a separable, continuous trace $C^*$-algebra with $\topdim(D)\leq\topdim(D')\leq\topdim(A)$.
    By construction, $C\otimes e_{11}\subset D$, and this inclusion is proper since $D\subset A\otimes e_{11}$ and the inclusion $C\otimes e_{11}\subset A\otimes e_{11}$ is proper.
\end{proof}

%------------------------------------------------------------------------------------------
\begin{lma}
\label{HTprp:topdim:extension_for_ideal_proper}
    Let $A$ be a $C^*$-algebra, let $J\lhd A$ be an ideal, and let $C\subset A$ be a sub-$C^*$-algebra.
    Assume $K\subset J$ is a sub-$C^*$-algebra that contains $C\cap J$ and such that the inclusion $C\cap J\subset K$ is proper.
    Then $K$ is an ideal in the sub-$C^*$-algebra $C^{*}(K,C)\subset A$ generated by $K$ and $C$.
    Moreover, there is a natural isomorphism $C^{*}(K,C)/K \cong C/(C\cap J)$.
\end{lma}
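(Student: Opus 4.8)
The plan is to prove the two assertions in turn, with almost all of the real work concentrated in showing that $K$ absorbs products with $C$ on both sides; once that is in place, the rest is formal. Write $B:=C^{*}(K,C)$ for brevity. Since the inclusion $C\cap J\subseteq K$ is proper, $C\cap J$ contains an approximate unit $(u_\lambda)$ for $K$. I would first show $KC\subseteq K$ and $CK\subseteq K$. Given $k\in K$ and $c\in C$, the product $u_\lambda c$ lies in $C$ (a product of two elements of the $C^{*}$-algebra $C$) and also in $J$ (since $u_\lambda\in J$ and $J\lhd A$), hence $u_\lambda c\in C\cap J\subseteq K$; therefore $k(u_\lambda c)\in K$, and letting $\lambda\to\infty$ and using $ku_\lambda\to k$ gives $kc=\lim_\lambda k(u_\lambda c)\in\overline{K}=K$. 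The inclusion $CK\subseteq K$ follows symmetrically, now using $cu_\lambda\in C\cap J\subseteq K$. This is the only step that uses properness, and it is the crux of the lemma.

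Next I would deduce that $K$ is an ideal of $B$. Consider the idealizer $N:=\{b\in B:bK\subseteq K\text{ and }Kb\subseteq K\}$. One checks directly that $N$ is a norm-closed subalgebra of $B$, and it is self-adjoint because $K=K^{*}$, so $N$ is a $C^{*}$-subalgebra of $B$. By the previous paragraph $N\supseteq C$, and trivially $N\supseteq K$. Since $B$ is generated by $C$ and $K$, it follows that $N=B$; that is, $bK\subseteq K$ and $Kb\subseteq K$ for every $b\in B$, so $K$ is a closed two-sided ideal of $B$.

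For the isomorphism, let $q\colon B\to B/K$ be the quotient map and restrict it to $C$, obtaining a ${}^{*}$-homomorphism $q|_C\colon C\to B/K$ with kernel $C\cap K$ and (automatically closed) range $q(C)$. Then $q^{-1}(q(C))=C+K$ is closed in $B$ and contains both $C$ and $K$, so $C+K=B$ and hence $q(C)=B/K$. Consequently $q|_C$ induces a ${}^{*}$-isomorphism $C/(C\cap K)\cong B/K$. Finally $C\cap K=C\cap J$: the inclusion $C\cap K\subseteq C\cap J$ holds because $K\subseteq J$, and $C\cap J\subseteq C\cap K$ holds by hypothesis. Combining, the inclusion $C\hookrightarrow B$ followed by $q$ descends to the asserted natural isomorphism $C/(C\cap J)\cong C^{*}(K,C)/K$. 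I expect the first paragraph — the use of the approximate unit drawn from $C\cap J$ — to be the main obstacle; the remainder consists of routine manipulations with ideals and quotients of $C^{*}$-algebras.
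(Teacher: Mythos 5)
Your proof is correct and follows essentially the same route as the paper: properness of $C\cap J\subset K$ supplies elements of $C\cap J$ acting approximately as a unit on $K$, products such as $u_\lambda c$ land in $C\cap J\subseteq K$, and a generation (idealizer) argument then shows $K\lhd C^{*}(K,C)$. The only cosmetic difference is in the second part, where you work directly with the quotient map $C^{*}(K,C)\to C^{*}(K,C)/K$ and the identity $C\cap K=C\cap J$, whereas the paper phrases the same computation via the quotient $\pi\colon A\to A/J$ and exactness of the resulting diagram.
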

\begin{proof}
    Set $B:=A/J$ and denote the quotient morphism by $\pi\colon A\to B$.
    Set $D:=\pi(C)\subset B$.
    Clearly, $C^{*}(K,C)$ contains both $K$ and $C$, and it is easy to see that the restriction of $\pi$ to $C^{*}(K,C)$ maps onto $D$.
    The situation is shown in the following commutative diagram, where the top and bottom rows are exact:
    \begin{center}
        \makebox{
        \xymatrix@=20pt{
        0 \ar[r] & J \ar[r] & A \ar[r]^{\pi} & B \ar[r] & 0\\
        & K \ar[r] \ar@{}[u]|{\cup} & {C^{*}(K,C)} \ar@{}[u]|{\cup} \ar[r] & D \ar@{}[u]|{\cup} \\
        0 \ar[r] & C\cap J \ar@{}[u]|{\cup} \ar[r] & C \ar@{}[u]|{\cup} \ar[r] & D \ar@{}[u]|{||} \ar[r] & 0\\
        } }
    \end{center}

    Let us show that $K$ is an ideal in $C^{*}(K,C)$.
    Since $C^{*}(K,C)$ is generated by elements of $K$ and $C$, it is enough to show that $xy$ and $yx$ lie in $K$ whenever $x\in K$ and $y\in K$ or $y\in C$.
    For $y\in K$ that is clear, so assume $y\in C$.

    Since $C\cap J\subset K$ is proper, for any $\varepsilon>0$ there exists $c\in C\cap J$ such that $\|cxc-x\|<\varepsilon$.
    Then $\|xy-cxcy\|,\|yx-ycxc\|<\varepsilon\|y\|$.
    Moreover, $cxcy\in K$ and $ycxc\in K$ since $cy,yc \in C\cap J\subset K$.
    For $\varepsilon>0$ was arbitrary, it follows that $xy,yx\in K$.
    This shows that the middle row in the above diagram is also exact.
\end{proof}

%------------------------------------------------------------------------------------------
\begin{prp}
\label{HTprp:topdim:sep_subalg_GCR-ideal}
    Let $A$ be a $C^*$-algebra, let $J\lhd A$ be an ideal of type I, and let $C\subset A$ be a separable sub-$C^*$-algebra.
    Then there exists a separable sub-$C^*$-algebra $D\subset A$ such that $C\subset D$ and $\topdim(D\cap J)\leq\topdim(J)$.
\end{prp}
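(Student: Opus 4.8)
I would prove a slightly strengthened statement by transfinite induction on the ordinal length $\mu$ of a composition series $(J_\alpha)_{\alpha\le\mu}$ of $J$ with continuous-trace successive quotients (such a series exists because $J$ is type I): for every $C^*$-algebra $A$, every ideal $J\lhd A$ carrying such a series of length $\mu$, and every separable $C\subseteq A$, there is a separable $D$ with $C\subseteq D\subseteq A$ and $\topdim(D\cap J)\le\topdim(J)$, and \emph{in addition} $D$ is generated by $C$ together with a subset of $J$ --- equivalently $D=C^*(C,D\cap J)$, so that $\pi_J$ induces $D/(D\cap J)\cong C/(C\cap J)$. This extra clause is the bookkeeping device that makes the induction close: if a $D$ of this form is later enlarged only by elements of $J$, then its image in $A/J$ does not change, so control already obtained on the ``$J$-part'' is not destroyed. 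The assertion of \autoref{HTprp:topdim:sep_subalg_GCR-ideal} is this stronger statement stripped of the generation clause (and specialising to $J=A$ then gives axiom (D6) for $\topdim$ on type I $C^*$-algebras).

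For the base case ($\mu\le 1$, i.e.\ $J$ has continuous trace, the case $\mu=0$ being trivial) I would apply \autoref{HTprp:topdim:lma_sep_subalg_CT} to $J$ and the separable subalgebra $C\cap J$ to get a separable continuous-trace $K$ with $C\cap J\subseteq K\subseteq J$ proper and $\topdim(K)\le\topdim(J)$, and then invoke \autoref{HTprp:topdim:extension_for_ideal_proper} to see that $K\lhd D:=C^*(K,C)$ with $D/K\cong C/(C\cap J)$ (via $\pi_J$); comparing this with $D/(D\cap J)\cong C/(C\cap J)$ (also via $\pi_J$, using $K\subseteq D\cap J\subseteq J$) forces $D\cap J=K$. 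For the successor step $\mu=\beta+1$ I would work in $\bar A:=A/J_\beta$, which has the continuous-trace ideal $\bar J:=J/J_\beta$, run the base-case construction there starting from $\pi_\beta(C)$ to get a separable continuous-trace $\bar K=\bar D_1\cap\bar J\subseteq\bar J$ with $\bar D_1:=C^*(\bar K,\pi_\beta(C))$ and $\topdim(\bar K)\le\topdim(\bar J)$, lift this to a separable $C^{(1)}\subseteq A$ with $C\subseteq C^{(1)}$, $\pi_\beta(C^{(1)})=\bar D_1$, all new generators chosen inside $J$, and then apply the inductive hypothesis to $A$, the ideal $J_\beta$, and $C^{(1)}$ to obtain $D\supseteq C^{(1)}$ with $\topdim(D\cap J_\beta)\le\topdim(J_\beta)$ and $D=C^*(C^{(1)},D\cap J_\beta)$. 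Because the generators added in the last step lie in $J_\beta$, one has $\pi_\beta(D)=\bar D_1$, hence $\pi_\beta(D\cap J)=\bar D_1\cap\bar J=\bar K$ (the inclusion $\supseteq$ since $D$ contains the chosen lifts), so $(D\cap J)/(D\cap J_\beta)\cong\bar K$ has continuous trace; \autoref{HTprp:topdim:compSeries} applied to the composition series $0\lhd D\cap J_\beta\lhd D\cap J$ of the type I algebra $D\cap J$ then gives $\topdim(D\cap J)=\max\{\topdim(D\cap J_\beta),\topdim(\bar K)\}\le\max\{\topdim(J_\beta),\topdim(J/J_\beta)\}=\topdim(J)$, the last equality being \autoref{HTprp:topdim:compSeries} for $J$.

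The limit case is where the separability of $C$ must really be used. If $\mu$ has uncountable cofinality, then since $C\cap J$ is separable and $J=\overline{\bigcup_{\alpha<\mu}J_\alpha}$, a countable-supremum argument shows $C\cap J\subseteq J_\nu$ for some $\nu<\mu$; applying the inductive hypothesis to $A$, $J_\nu$, $C$ gives $D=C^*(C,D\cap J_\nu)$, and comparing $D/(D\cap J)$ with $D/(D\cap J_\nu)$ (both canonically $\cong C/(C\cap J)$) shows $D\cap J=D\cap J_\nu$, whence $\topdim(D\cap J)\le\topdim(J_\nu)\le\topdim(J)$. If $\mu$ has countable cofinality, I would fix a cofinal sequence $\mu_0\le\mu_1\le\cdots$ in $\mu$ and an index function $j(\cdot)$ attaining every natural number infinitely often, and build $C=D_0\subseteq D_1\subseteq\cdots$, where to pass from $D_k$ to $D_{k+1}$ one first adjoins elements of $\bigcup_m J_{\mu_m}$ that approximate a countable dense subset of $D_k\cap J$ to within $1/k$ and then applies the inductive hypothesis for the ideal $J_{\mu_{j(k)}}$. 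For $D:=\overline{\bigcup_k D_k}$ the approximation facts of \ref{HTpgr:prelim:approximation} give $D\cap J=\overline{\bigcup_k(D_k\cap J)}$, the adjoined approximants force $D\cap J=\overline{\bigcup_m(D\cap J_{\mu_m})}$, and for each fixed $m$ the algebras $D_k\cap J_{\mu_m}$ over the stages $k$ immediately following an application of the inductive hypothesis for $J_{\mu_m}$ form an increasing sequence of algebras of topological dimension $\le\topdim(J_{\mu_m})$ with dense union in the type I algebra $D\cap J_{\mu_m}$, so $\topdim(D\cap J_{\mu_m})\le\topdim(J_{\mu_m})$ by \autoref{HTprp:topdim:approx_GCR} (axiom (D5), already established for $\topdim$); finally $(D\cap J_{\mu_m})_m$, completed by $D\cap J$ on top, is a composition series of $D\cap J$ whose successive quotients are quotients of the $D\cap J_{\mu_{m+1}}$, so \autoref{HTprp:topdim:compSeries} yields $\topdim(D\cap J)\le\topdim(J)$.

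The main obstacle is exactly the limit case: after enlarging $D$ again and again one must keep the intersections $D\cap J_\alpha$ under control, which is why I carry the generation invariant, cycle through the layers $J_{\mu_m}$ so that each gets re-fixed infinitely often, and adjoin elements of the $J_{\mu_m}$ so that $D\cap J$ cannot escape the chosen cofinal sequence --- the passage to the limit is then done by the already-proven (D5), \autoref{HTprp:topdim:approx_GCR}. A recurring minor point is the verification of the ``intersection with an ideal'' identities ($D\cap J=K$ in the base case, $\pi_\beta(D\cap J)=\bar K$ in the successor case, $D\cap J=D\cap J_\nu$ in the limit case): each reduces to the observation that a surjection between two $C^*$-algebras both canonically identified with $C/(C\cap J)$ is an isomorphism.
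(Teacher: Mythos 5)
Your proposal is correct, and although it runs on the same ingredients as the paper's proof (\autoref{HTprp:topdim:lma_sep_subalg_CT}, \autoref{HTprp:topdim:extension_for_ideal_proper}, \autoref{HTprp:topdim:approx_GCR}, \autoref{HTprp:topdim:compSeries}, assembled by transfinite induction on the length of a composition series), the induction is organized in a genuinely different way. The paper peels off the \emph{bottom} continuous-trace layer $J_1$; since this does not shorten an infinite composition series (re-indexing $(J_\alpha/J_1)_{1\le\alpha\le\mu}$ gives length $\mu$ again when $\mu$ is infinite), its successor step works only for finite lengths, and every ordinal must then be treated in the form $\lambda+n$, with Step 3 controlling the $[0,\lambda)$-part and the finite tail $[\lambda,\lambda+n)$ simultaneously by an interleaved sequence of separable subalgebras in both cofinality cases. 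You peel off the \emph{top} layer $J/J_\beta$ instead, so your successor step covers every successor ordinal, and you strengthen the statement by the invariant $D=C^*(C,D\cap J)$; that invariant plays the role that the paper's explicit interleaving and its properness device (\autoref{HTprp:topdim:extension_for_ideal_proper}) play, namely ensuring that later enlargements by elements of smaller ideals do not disturb quotient data already fixed. What your route buys is the uncountable-cofinality limit case, which collapses to ``$C\cap J\subset J_\nu$ for some $\nu<\mu$'' plus the invariant (yielding $D\cap J=D\cap J_\nu$), with no sequence construction at all, whereas the paper still runs its sequence there; the price is your countable-cofinality case, where you must cycle through the layers $J_{\mu_m}$ infinitely often so that each $D\cap J_{\mu_m}$ is a limit of controlled stages, while the paper's construction only ever juggles two pieces at a time before invoking \autoref{HTprp:topdim:approx_GCR} and \autoref{HTprp:topdim:compSeries} exactly as you do.

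One small caution on your recurring justification of the intersection identities: a surjection between two $C^*$-algebras that are merely abstractly isomorphic to $C/(C\cap J)$ need not be injective, so what is really needed is that the canonical identifications commute with the surjection. This compatibility does hold in each of your three uses (in the base case $D\cap J=K$ is in fact immediate from the exactness of the middle row in \autoref{HTprp:topdim:extension_for_ideal_proper}, i.e.\ the kernel of $\pi_J$ on $C^*(K,C)$ is $K$; in the limit case the surjection $D/(D\cap J_\nu)\to D/(D\cap J)$ intertwines the identifications induced by the inclusion $C\subset D$, using $C\cap J_\nu=C\cap J$), so this is a matter of phrasing rather than a gap.
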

\begin{proof}
    Let $(J_\alpha)_{\alpha\leq\mu}$ be a composition series for $J$ with successive quotients that have continuous trace.
    To simplify notation, we will write $B[\alpha,\beta)$ for $(B\cap J_\beta)/(B\cap J_\alpha)$ and $B[\alpha,\infty)$ for $B/(B\cap J_\alpha)$ whenever $B\subset A$ is a subalgebra and $\alpha\leq\beta\leq\mu$ are ordinals.
    In particular, $A[0,\beta)=J_\beta$ and $A[\alpha,\infty)=A/J_\alpha$.
    We prove the statement of the proposition by transfinite induction over $\mu$, which we carry out in three steps.

    Step 1: The statement holds for $\mu=0$.
    This follows since $J$ is assumed to have a composition series with length $0$ and so $J=\{0\}$ and we can simply set $D:=C$.

    Step 2: If the statement holds for a finite ordinal $n$, then it also holds for $n+1$.

    To prove this, assume $J$ has a composition series $(J_\alpha)_{\alpha\leq n +1}$.
    Let $d:=\topdim(J)$.
    Given $C\subset A$ separable, we want to find a separable subalgebra $D\subset A$ with $C\subset D$ and $\topdim(D[0,n+1))\leq d$.
    The following commutative diagram, whose rows are short exact sequences, contains the algebras and maps that we will construct below:
    \begin{center}
        \makebox{
        \xymatrix@=20pt{
        0 \ar[r] & A[0,1) \ar[r] & A \ar[r] & A[1,\infty) \ar[r] & 0 \\
        0 \ar[r] & E[0,1) \ar[r] \ar@{}[u]|{\cup} & E \ar[r] \ar@{}[u]|{\cup} & E' \ar[r] \ar@{}[u]|{\cup} & 0 \\
        0 \ar[r] & C[0,1) \ar@{}[u]|{\cup} \ar[r] & C \ar@{}[u]|{\cup} \ar[r] & C[1,\infty) \ar[r] \ar@{}[u]|{\cup} & 0 \\
        } }
    \end{center}

    Consider $A[1,\infty)$ together with the ideal $A[1,n+1)=J[1,n+1)$.
    Note that $A[1,n+1)$ has the canonical composition series $(A[1,\alpha))_{1\leq\alpha\leq n+1}$ of length $n$.
    By assumption of the induction, the statement holds for $n$, and so there is a separable sub-$C^*$-algebra $E'\subset A[1,\infty)$ such that $C[1,\infty)\subset E'$ and $\topdim(E'\cap A[1,n+1))\leq\topdim(A[1,n+1))\leq d$.
    Find a separable sub-$C^*$-algebra $E\subset A$ such that $C\subset E$ and $E[1,\infty)=E'$.

    We apply \autoref{HTprp:topdim:lma_sep_subalg_CT} to the inclusion $E[0,1)\subset A[0,1)$ to find a separable sub-$C^*$-algebra $K\subset A[0,1)$ containing $E[0,1)$ and such that the inclusion $E[0,1)\subset K$ is proper, and $\topdim(K)\leq\topdim(A[0,1))\leq d$.
    Set $D:=C^{*}(K,E)\subset A$, which is a separable $C^*$-algebra with $C\subset D$.
    By \autoref{HTprp:topdim:extension_for_ideal_proper}, $D$ is an extension of $E$ by $K$, and therefore \autoref{HTprp:topdim:compSeries} gives:
    \begin{align*}
        \topdim(D[0,n+1))
            &=\max\{\topdim(D[0,1)),\topdim(D[1,n+1))\} \\
            &=\max\{\topdim(K),\topdim(E'\cap A[1,n+1))\} \\
            &\leq d.
    \end{align*}

    Step 3: Assume $\lambda$ is a limit ordinal, and $n$ is finite.
    If the statement holds for all $\alpha<\lambda$, then it holds for $\lambda+n$.

    We will prove this by distinguishing the two sub-cases that $\lambda$ has cofinality at most $\omega$, or cofinality bigger than $\omega$.
    We start the construction for both cases together.
    Later we will treat them separately.
    Let $d:=\topdim(J)$.

    We will inductively define ordinals $\alpha_k<\mu$ and sub-$C^*$-algebras $D_k,E_k\subset A$ with the following properties:
    \begin{enumerate}
    	\item
    	$\alpha_1\leq\alpha_2\leq\ldots$,
    	\item
    	$D_k\subset E_k$ and $\topdim(E_k[\lambda,\lambda+n))\leq d$,
    	\item
    	$E_k\subset D_{k+1}$ and $\topdim(D_{k+1}[0,\alpha_{k+1}))\leq d$.
    \end{enumerate}

    In both cases 3a and 3b below, we construct $E_k$ from $D_k$ as follows:
    Given $D_k$, consider $D_k[\lambda,\infty)\subset A[\lambda,\infty)$ and the ideal $A[\lambda,\lambda+n)\lhd A[\lambda,\infty)$ which has a composition series of length $n$.
    Since $n<\lambda$, we get by assumption of the induction that there exists a separable subalgebra $E_k'\subset A[\lambda,\infty)$ such that $D_k[\lambda,\infty)\subset E_k'$ and $\topdim(E_k'\cap A[\lambda,\lambda+n))\leq d$.
    Let $E_k\subset A$ be any separable $C^*$-algebra such that $D_k\subset E_k$ and $E_k[\lambda,\infty)=E_k'$.

    Case 3a: Assume $\lambda$ has cofinality at most $\omega$, i.e., there exist ordinals $0=\lambda_0<\lambda_1<\lambda_2<\ldots<\lambda$ such that $\lambda=\sup_k\lambda_k$.

    In this case, we let $\alpha_k:=\lambda_k$, and we set $D_0:=C$.
    Given $D_k$, we construct $E_k$ as described above.
    Given $E_k$, we get $D_{k+1}$ satisfying (3) by assumption of the induction.

    Case 3b:
    Assume $\lambda$ has cofinality larger than $\omega$.

    We start by setting $\alpha_0:=0$ and $D_0:=C$.
    Given $D_k$, we construct $E_k$ as described above.
    Given $E_k$, we define $\alpha_{k+1}$ as follows:
    \begin{align*}
        \alpha_{k+1} :=\inf \{\alpha \: |\: \alpha_k\leq\alpha\leq\lambda, \text{ and } E_k[0,\alpha) = E_k[0,\lambda)\}.
    \end{align*}
    Since $\lambda$ has cofinality larger than $\omega$ and $E_k$ is separable, we have $\alpha_{k+1}<\lambda$.
    Hence, we get $D_{k+1}$ satisfying (3) by assumption of the induction.

    From now on we treat the cases 3a and 3b together.
    Set $D:=\overline{\bigcup_kD_k}=\overline{\bigcup_kE_k}$.
    This is a separable sub-$C^*$-algebra of $A$ with $C\subset D$.
    Since $D[\lambda,\lambda+n) = \overline{\bigcup_k E_k[\lambda,\lambda+n)}$
    and $\topdim(E_k[\lambda,\lambda+n))\leq d$ for all $k$, we get $\topdim(D[\lambda,\lambda+n))\leq d$ from \autoref{HTprp:topdim:approx_GCR}.

    One checks that $D[0,\lambda) = \overline{\bigcup_k D_k[0,\alpha_k)}$.
    Since $\topdim(D_k[0,\alpha_k))\leq d$ for all $k$, we get $\topdim(D[0,\lambda))\leq d$, again by \autoref{HTprp:topdim:approx_GCR}.

    Then \autoref{HTprp:topdim:compSeries} gives:
    \begin{align*}
        \topdim(D[0,\lambda+n))
            =\max\{ \topdim(D[0,\lambda)), \topdim(D[\lambda,\lambda+n)) \}
            \leq d.
    \end{align*}
		
	This completes the proof.
\end{proof}

%------------------------------------------------------------------------------------------
\begin{cor}
\label{HTprp:topdim:axiom6}
    The topological dimension of type I $C^*$-algebras satisfies the Marde\v{s}i\'c factorization axiom (D6), i.e., given a type I $C^*$-algebra $A$ and a separable sub-$C^*$-algebra $C\subset A$, there exists a separable $C^*$-algebra $D\subset A$ such that $C\subset D\subset A$ and $\topdim(D)\leq\topdim(A)$.
\end{cor}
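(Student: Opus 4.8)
The plan is to obtain this as the special case of \autoref{HTprp:topdim:sep_subalg_GCR-ideal} in which the type I ideal is taken to be all of $A$. Concretely: since $A$ is of type I, the ideal $J:=A\lhd A$ is an ideal of type I, so \autoref{HTprp:topdim:sep_subalg_GCR-ideal} applies to the separable sub-$C^*$-algebra $C\subset A$ and yields a separable sub-$C^*$-algebra $D\subset A$ with $C\subset D$ and $\topdim(D\cap J)\leq\topdim(J)$. Since $D\cap J=D\cap A=D$ and $\topdim(J)=\topdim(A)$, this is precisely $\topdim(D)\leq\topdim(A)$, which is the claim. (All of the $\topdim$'s occurring here are defined: a sub-$C^*$-algebra of a type I $C^*$-algebra is again type I, so its primitive ideal space is almost Hausdorff by \ref{HTpgr:topdim:typeI_almT2}.)

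There is no remaining obstacle at this level; the entire difficulty has already been absorbed into \autoref{HTprp:topdim:sep_subalg_GCR-ideal}, whose proof proceeds by transfinite induction over the length of a composition series with continuous-trace successive quotients. In that argument the base case is handled by \autoref{HTprp:topdim:lma_sep_subalg_CT} (and hence ultimately by Marde\v{s}i\'c's factorization theorem for compact metrizable spaces), the successor step by \autoref{HTprp:topdim:extension_for_ideal_proper} together with the additivity \autoref{HTprp:topdim:compSeries}, and the limit-ordinal step by the approximation stability \autoref{HTprp:topdim:approx_GCR} (which is where axiom (D5) for $\topdim$ enters). The only thing to verify in the present corollary is the trivial bookkeeping that the hypothesis ``$A$ is type I'' is exactly what makes $J=A$ an admissible type I ideal, so that the conclusion of the proposition specializes to the stated form of axiom (D6).

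Finally, combining this corollary with \autoref{HTprp:topdim:compSeries}, \autoref{HTprp:topdim:her} (which give (D1)--(D4)) and \autoref{HTprp:topdim:approx_GCR} (which gives (D5)) shows that $\topdim$ is a dimension theory in the sense of \autoref{HTdfn:NCDimThy:ncDimThy} on the class of type I $C^*$-algebras.
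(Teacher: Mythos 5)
Your proposal is correct and matches the paper's intended argument: the corollary is exactly the special case $J=A$ of \autoref{HTprp:topdim:sep_subalg_GCR-ideal}, with the trivial identifications $D\cap A=D$ and $\topdim(J)=\topdim(A)$, and your remark that $\topdim(D)$ is defined (since sub-$C^*$-algebras of type I algebras are type I) is a correct and welcome bit of bookkeeping.
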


%------------------------------------------------------------------------------------------
\noindent
    This following theorem is the main result of this paper.
    It follows immediately from the above \autoref{HTprp:topdim:axiom6}, \autoref{HTprp:topdim:compSeries} and \autoref{HTprp:topdim:approx_GCR}.

%------------------------------------------------------------------------------------------
\begin{thm}
\label{HTprp:topdim:dimThy}
    The topological dimension is a noncommutative dimension theory in the sense of \autoref{HTdfn:NCDimThy:ncDimThy} for the class of type I $C^*$-algebras.
\end{thm}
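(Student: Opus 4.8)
The plan is to verify the six axioms (D1)--(D6) of \autoref{HTdfn:NCDimThy:ncDimThy} for the assignment $A\mapsto\topdim(A)$ on the class of type I $C^*$-algebras, noting first that this class is closed under ${}^*$-isomorphisms, ideals, quotients, finite direct sums and minimal unitizations, so that the axioms make sense. Most of the work has already been done in the preceding results, so the proof is essentially a matter of assembling them. The nontrivial preliminary observation is that the primitive ideal space of a type I $C^*$-algebra (and of all its ideals, quotients, and finite direct sums) is almost Hausdorff, so that $\topdim$ is actually defined on the whole class; this is recorded in \ref{HTpgr:topdim:typeI_almT2}.

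First I would dispatch (D1)--(D4). By \ref{HTpgr:prelim:typeI} every type I algebra has a composition series with continuous-trace successive quotients, so \autoref{HTprp:topdim:compSeries} applies. If $J\lhd A$ then intersecting a composition series of $A$ with $J$ and passing to quotients yields composition series for $J$ and $A/J$ whose successive subquotients are a subfamily of (subquotients of) those of $A$; hence the formula $\topdim(A)=\sup_\alpha\topdim(J_{\alpha+1}/J_\alpha)$ in \autoref{HTprp:topdim:compSeries} gives $\topdim(J)\leq\topdim(A)$ and $\topdim(A/J)\leq\topdim(A)$, proving (D1) and (D2). For (D3), $\Prim(A\oplus B)=\Prim(A)\sqcup\Prim(B)$ as a topological space (a disjoint union of clopen pieces), and a locally closed Hausdorff subset of the disjoint union splits as a disjoint union of such subsets of each factor, so $\topdim(A\oplus B)=\max\{\topdim(A),\topdim(B)\}$; alternatively concatenate composition series and apply \autoref{HTprp:topdim:compSeries}. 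For (D4), $\widetilde A/A\cong\mathbb C$ has $\topdim=0$, and appending $\widetilde A$ to a composition series of $A$ adds only the one-dimensional-spectrum quotient $\mathbb C$, so \autoref{HTprp:topdim:compSeries} yields $\topdim(\widetilde A)=\max\{\topdim(A),0\}=\topdim(A)$.

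Next, axiom (D5) is exactly \autoref{HTprp:topdim:approx_GCR}: if a type I algebra $A$ is approximated by sub-$C^*$-algebras $A_i$ (which are automatically type I, so $\topdim(A_i)$ makes sense) with $\topdim(A_i)\leq n$, then $\topdim(A)\leq n$. Axiom (D6) is exactly \autoref{HTprp:topdim:axiom6} (i.e.\ \autoref{HTprp:topdim:sep_subalg_GCR-ideal} applied with $J=A$): given a separable $C\subset A$ there is a separable $D$ with $C\subset D\subset A$ and $\topdim(D)\leq\topdim(A)$; here $D$, being a sub-$C^*$-algebra of a type I algebra, is again type I, so $D\in\mathcal C$ as required. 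This completes the verification of all six axioms and hence the theorem.

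I expect the substantive content to lie entirely in the already-proved \autoref{HTprp:topdim:approx_GCR} and \autoref{HTprp:topdim:sep_subalg_GCR-ideal}; given those, the remaining ``main obstacle'' is merely the bookkeeping of confirming that (D1)--(D4) follow cleanly from \autoref{HTprp:topdim:compSeries} and that the class $\mathcal C$ of type I algebras really is closed under the four operations demanded in \autoref{HTdfn:NCDimThy:ncDimThy} and under passage to the subalgebra $D$ produced in (D6). Since the theorem statement itself says it ``follows immediately'' from \autoref{HTprp:topdim:axiom6}, \autoref{HTprp:topdim:compSeries} and \autoref{HTprp:topdim:approx_GCR}, the write-up can be very short, essentially a one-line citation of those three results together with the closure remark.
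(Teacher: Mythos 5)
Your proposal is correct and follows essentially the same route as the paper: (D1)--(D4) are deduced from \autoref{HTprp:topdim:compSeries} (the paper simply applies it to short composition series such as $0\subset J\subset A$, which is a bit more direct than your subquotient bookkeeping, but the tool is the same), (D5) is \autoref{HTprp:topdim:approx_GCR}, and (D6) is \autoref{HTprp:topdim:axiom6}. The closure observations about the class of type I algebras and the almost Hausdorff property of their primitive ideal spaces are exactly the points the paper relies on as well.
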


%------------------------------------------------------------------------------------------
\begin{pgr}
\label{HTpgr:topdim:extension_to_all_Cs}
    Let us extend the topological dimension from the class of type I $C^*$-algebras to all $C^*$-algebras, as defined in \autoref{HTprp:NCDimThy:DimThy_extension_by_approx}.
    This dimension theory $\topdim\widetilde{}\;\colon\mathcal{C}^{*}\to\overline{\mathbb{N}}$ is Morita-invariant since $\topdim(A)=\topdim(A\otimes\mathbb{K})$ for any type I $C^*$-algebra $A$.

    If $\topdim\widetilde{}\;(A)<\infty$, then $A$ is in particular approximated by type I sub-$C^*$-algebras.
    This implies that $A$ is nuclear, satisfies the universal coefficient theorem (UCT), see \cite[Theorem~1.1]{Dad2003}, and is not properly infinite.
    It is possible that this dimension theory is connected to the decomposition rank and nuclear dimension, although the exact relation is not clear.

    Let us show that the (extended) topological dimension behaves well with respect to tensor products.
    First, if $A,B$ are separable, type I $C^*$-algebras, then $\Prim(A\otimes B)\cong\Prim(A)\times\Prim(B)$, see \cite[IV.3.4.25, p.390]{Bla2006}.
    This implies:
    \begin{align*}
        \topdim(A\otimes B)  &\leq\topdim(A)+\topdim(B).
    \end{align*}

    Next, assume $A,B$ are $C^*$-algebras with $\topdim\widetilde{}\;(A)=d_1<\infty$ and $\topdim\widetilde{}\;(B)=d_2<\infty$.
    This means that $A$ is approximated by separable, type I algebras $A_i\subset A$ with $\topdim(A_i)\leq d_1$, and similarly $B$ is approximated by separable, type I algebras $B_j\subset B$ with $\topdim(B_j)\leq d_2$.
    Then $A\otimes B$ is approximated by the algebras $A_i\otimes B_j$, and we have seen that $\topdim(A_i\otimes B_j)\leq d_1+d_2$.
    Thus:
    \begin{align*}
        \topdim\widetilde{}\;(A\otimes B)  &\leq \topdim\widetilde{}\;(A)+\topdim\widetilde{}\;(B).
    \end{align*}
    Note that we need not specify the tensor product, since $\topdim\widetilde{}\;(A)<\infty$ implies that $A$ is nuclear.
\end{pgr}

%##########################################################################################
%##########################################################################################
%##########################################################################################
\section{Dimension theories of type I $C^*$-algebras}
\label{HTsec:typeI}

%------------------------------------------------------------------------------------------
\noindent
    In this section we study the relation of the topological dimension of type I $C^*$-algebras to other dimension theories.
    It was shown by Brown, \cite[Theorem 3.10]{Bro2007}, how to compute the real and stable rank of a CCR algebra $A$ in terms of the topological dimension of certain canonical algebras $A_k$ associated to $A$.
    We use this to obtain a general estimate of the real and stable rank of a CCR algebra in terms of its topological dimension, see \autoref{HTprp:typeI:CCR-estimates}.
    Using the composition series of a type I $C^*$-algebra, we will obtain similar (but weaker) estimates for general type I $C^*$-algebras, see \autoref{HTprp:typeI:GCR-estimates}.
    \\

    Let $A$ be a $C^*$-algebra.
    We denote by $\rr(A)$ its real rank, see \cite{BroPed1991}, by $\sr(A)$ its stable rank, and by $\csr(A)$ its connected stable rank, see \cite[Definition 1.4, 4.7]{Rie1983}
    We denote by $A_k$ the successive quotient of $A$ that corresponds to the irreducible representations of dimension $k$.

    If $t$ is a real number, we denote by $\lfloor t\rfloor$ the largest integer $n\leq t$, and by $\lceil t\rceil$ the smallest integer $n\geq t$.

%------------------------------------------------------------------------------------------
\begin{thm}[{Brown, \cite[Theorem 3.10]{Bro2007}}]
\label{HTprp:typeI:rr_sr_CCR}
    Let $A$ be a CCR algebra with $\topdim(A)<\infty$.
    Then:
    \begin{enumerate}[(1)   ]
        \item
        If $\topdim(A)\leq 1$, then $\sr(A)=1$.
        \item
        If $\topdim(A)>1$, then $\sr(A)=\sup_{k\geq 1} \max\{\left\lceil\frac{\topdim(A_k)+2k-1}{2k}\right\rceil,2\}$.
        \item
        If $\topdim(A)=0$, then $\rr(A)=0$.
        \item
        If $\topdim(A)>0$, then $\rr(A)=\sup_{k\geq 1} \max\{\left\lceil\frac{\topdim(A_k)}{2k-1}\right\rceil,1\}$.
    \end{enumerate}
\end{thm}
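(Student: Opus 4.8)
The plan is to follow Brown's strategy: decompose $A$ into homogeneous building blocks, compute the ranks of those blocks by nonstable $K$-theory, and reassemble. Since $A$ is CCR, the function $\pi\mapsto\dim\pi$ on $\widehat{A}$ is lower semicontinuous, so for each $k\geq 1$ the set $\{\pi:\dim\pi\geq k\}$ is open and corresponds to an ideal $I_k\lhd A$, with $A=I_1\supseteq I_2\supseteq\cdots$ and $\bigcap_k I_k=:I_\infty$ the ideal collecting the infinite-dimensional representations. The subquotient $A_k=I_k/I_{k+1}$ is $k$-homogeneous (every irreducible representation has dimension exactly $k$), its spectrum $\widehat{A_k}$ is locally compact Hausdorff, and $\locdim(\widehat{A_k})=\topdim(A_k)$. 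Refining this filtration to an honest transfinite composition series (using that quotients of $A$ are again CCR, so the construction iterates and is closed under passing to closures of increasing unions at limit ordinals) produces a composition series whose successive quotients are the $A_k$ together with the piece $A_\infty$. Thus it suffices to (i) compute $\sr$, $\rr$ and, for the induction, $\csr$ of each homogeneous $A_k$; (ii) climb the composition series; and (iii) treat the low-dimensional exceptional cases and $A_\infty$ separately.

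For the homogeneous pieces, the Fell--Tomiyama--Takesaki structure theorem identifies $A_k$ with the algebra of $C_0$-sections of a locally trivial bundle over $X:=\widehat{A_k}$ with fibre $M_k$. When the bundle is trivial, $A_k\cong C_0(X)\otimes M_k$ and the ranks are computed by classical obstruction theory: an $n$-tuple that is a left generator of $C_0(X,M_k)$ is, pointwise, a $kn\times k$ matrix of full column rank, the complement of a locus of real codimension $2k$; a self-adjoint $(n+1)$-tuple with no common kernel vector is the complement of a locus of codimension $2k-1$ (after discounting the $\mathbb{C}P^{k-1}$ of directions). Standard section-extension arguments then give $\sr(C_0(X)\otimes M_k)=\big\lceil\frac{\dim X+2k-1}{2k}\big\rceil$ and $\rr(C_0(X)\otimes M_k)=\big\lceil\frac{\dim X}{2k-1}\big\rceil$ (with the obvious corrections when $\dim X$ is small), together with a comparable bound for $\csr$. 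I would then pass from the trivial to an arbitrary bundle by covering a compactification of $X$ by finitely many closed trivialising sets and inducting on the number of charts, using at each step Rieffel's extension inequality $\sr(E)\leq\max\{\sr(J),\sr(E/J),\csr(E/J)\}$, the analogous estimate for $\rr$ (and $\csr$), and the surjectivity of restriction of sections to a closed subset.

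The lower bounds are immediate: each $A_k$ is a subquotient of $A$ and both $\sr$ and $\rr$ are monotone under passing to ideals and quotients (\cite{Rie1983}, \cite{BroPed1991}, \cite{Elh1995}), so $\sr(A)\geq\sup_k\sr(A_k)$ and $\rr(A)\geq\sup_k\rr(A_k)$. For the matching upper bounds I would induct up the composition series: at successor stages apply the extension inequalities above, carrying $\csr$ along with $\sr$ (for which one needs the obstruction-theoretic $\csr$-estimates of the pieces and of their extensions), and at limit stages invoke the fact that $\sr$ and $\rr$ do not increase under inductive limits with injective connecting maps, the analogue of \autoref{HTprp:NCDimThy:dim_limit} for these theories. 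Once $\topdim(A)$ is large, the $\csr$-terms occurring in the inequalities are dominated by the $\sr$ of a lower-dimensional piece, so the nested maxima telescope into the asserted suprema, yielding equality. Statements (1) and (3) and the corrections ``$\max\{\cdot,2\}$'', ``$\max\{\cdot,1\}$'' are exactly the regime where the generic bundle formula undershoots: if $\topdim(A)\leq 1$ then every $A_k$ has $\sr=1$ and a direct argument (stable rank one is preserved by an extension of a stable-rank-one algebra by a stable-rank-one algebra of connected stable rank at most $2$ over a one-dimensional spectrum) gives $\sr(A)=1$; if $\topdim(A)=0$ then every $A_k$ and $A_\infty$ has real rank $0$, a property that passes through the composition series, so $\rr(A)=0$. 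The part $A_\infty$ (all irreducible representations infinite-dimensional) contributes only $\sr(A_\infty)\leq 2$ and a real rank bounded in the same way, both absorbed into the stated maxima (reading the missing ``$k=\infty$'' term as having $2k=\infty$).

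The step I expect to be the main obstacle is the local-to-global computation for non-trivial $M_k$-bundles together with the simultaneous control of $\csr$: this is where the exact denominators $2k$ and $2k-1$, the sharpness of the estimates (equalities rather than mere inequalities), and the low-dimensional corrections all have to be pinned down, and it is the genuinely $K$-theoretic core of the argument. By comparison, the filtration of $A$ into homogeneous pieces and the climb up the composition series are comparatively formal once the extension inequalities and the homogeneous computations are in hand.
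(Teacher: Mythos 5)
First, a point of comparison that matters here: the paper does not prove this statement at all --- it is quoted verbatim from Brown \cite{Bro2007} (Theorem 3.10 there) and only \emph{used} in \autoref{HTprp:typeI:CCR-estimates} and \autoref{HTprp:typeI:GCR-estimates} --- so there is no internal proof to measure your argument against. Judged on its own, your outline does follow the strategy of Brown's cited proof: filter a CCR algebra by the (lower semicontinuous) dimension of irreducible representations, identify the homogeneous subquotients $A_k$ with section algebras of $M_k$-bundles, compute $\sr$, $\rr$ and $\csr$ exactly for those, and reassemble along the composition series with Rieffel-type extension inequalities. But as written it is a plan rather than a proof, because every step that carries the actual content of the theorem is deferred.

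Concretely: (a) you yourself flag the exact rank computations for (possibly non-trivial) $M_k$-bundles, with simultaneous $\csr$ control, as ``the main obstacle''; since the theorem asserts equalities, and since even your lower bounds $\sr(A)\geq\sup_k\sr(A_k)$ are useless without the exact homogeneous values, this deferred step \emph{is} the theorem, not a technicality. (b) Your parenthetical lemma used for case (1) --- that stable rank one passes through an extension whose two pieces have stable rank one, connected stable rank at most $2$ and one-dimensional spectrum --- is false as stated: the Toeplitz algebra $\mathcal{T}$ (which the paper itself invokes in \autoref{HTrmk:typeI:rr_sr_GCR-estimate_sharp}) is an extension of $C(\mathbb{T})$ by $\mathbb{K}$ meeting all those hypotheses, yet $\sr(\mathcal{T})=2$; Rieffel's inequality $\sr(A)\leq\max\{\sr(I),\sr(A/I),\csr(A/I)\}$ can only give $\leq 2$ here, so CCR-ness must enter in an essential, non-formal way, and supplying that argument is precisely the hard point of (1). (c) The subquotient coming from infinite-dimensional irreducible representations is treated by the heuristic ``read $2k=\infty$'', which would give the value $1$; in fact this piece is exactly what forces the explicit $\max\{\cdot,2\}$ in (2), since it can carry all of $\topdim(A)$ while every finite-$k$ term is $\leq 1$, and the matching lower bounds ($\sr(A)\geq 2$ whenever $\topdim(A)>1$, $\rr(A)\geq 1$ whenever $\topdim(A)>0$) are nowhere addressed. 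Until these three gaps are filled --- which amounts to reproving Brown's theorem --- the proposal does not establish the stated equalities, nor even the sharp upper bounds.
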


%------------------------------------------------------------------------------------------
\noindent
    We may draw the following conclusion:

%------------------------------------------------------------------------------------------
\begin{cor}
\label{HTprp:typeI:CCR-estimates}
    Let $A$ be a CCR algebra.
    Then:
    \begin{align}
        \label{eq:HTprp:typeI:CCR-estimates:1}
        \sr(A)  &\leq \left\lfloor\frac{\topdim(A)}{2}\right\rfloor+1, \\
        \label{eq:HTprp:typeI:CCR-estimates:2}
        \csr(A) &\leq \left\lfloor\frac{\topdim(A)+1}{2}\right\rfloor+1, \\
        \label{eq:HTprp:typeI:CCR-estimates:3}
        \rr(A)  &\leq \topdim(A).
    \end{align}
\end{cor}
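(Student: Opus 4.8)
The plan is to read off all three inequalities from Brown's computation in \autoref{HTprp:typeI:rr_sr_CCR}, using essentially one structural input: if $A$ is CCR and $A_k$ denotes the successive subquotient coming from the $k$-dimensional irreducible representations, then $A_k$ is one of the successive quotients in a composition series of $A$, so \autoref{HTprp:topdim:compSeries} gives $\topdim(A_k)\le\topdim(A)$. Write $n:=\topdim(A)$ and $m_k:=\topdim(A_k)\le n$. If $n=\infty$ then every right-hand side in \eqref{eq:HTprp:typeI:CCR-estimates:1}--\eqref{eq:HTprp:typeI:CCR-estimates:3} is $\infty$ and there is nothing to prove, so we may assume $n<\infty$ and apply \autoref{HTprp:typeI:rr_sr_CCR}.

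The estimates \eqref{eq:HTprp:typeI:CCR-estimates:1} and \eqref{eq:HTprp:typeI:CCR-estimates:3} are then pure bookkeeping. For \eqref{eq:HTprp:typeI:CCR-estimates:1}: when $n\le 1$, part (1) gives $\sr(A)=1=\lfloor n/2\rfloor+1$; when $n\ge2$, part (2) writes $\sr(A)$ as $\sup_{k\ge1}\max\{\lceil(m_k+2k-1)/(2k)\rceil,2\}$, and since $m_k\le n$ and $k\ge1$,
\[
    \left\lceil\frac{m_k+2k-1}{2k}\right\rceil
    \le\left\lceil\frac{n+2k-1}{2k}\right\rceil
    =1+\left\lceil\frac{n-1}{2k}\right\rceil
    \le 1+\left\lceil\frac{n-1}{2}\right\rceil
    =1+\left\lfloor\frac n2\right\rfloor ,
\]
while $2\le\lfloor n/2\rfloor+1$ because $n\ge2$; taking the supremum over $k$ yields \eqref{eq:HTprp:typeI:CCR-estimates:1}. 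For \eqref{eq:HTprp:typeI:CCR-estimates:3}: when $n=0$, part (3) gives $\rr(A)=0$; when $n>0$, part (4) writes $\rr(A)$ as $\sup_{k\ge1}\max\{\lceil m_k/(2k-1)\rceil,1\}$, and $\lceil m_k/(2k-1)\rceil\le m_k\le n$ together with $1\le n$ gives $\rr(A)\le n$.

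The connected stable rank estimate \eqref{eq:HTprp:typeI:CCR-estimates:2} is the delicate point and the main obstacle, since \autoref{HTprp:typeI:rr_sr_CCR} says nothing about $\csr$. I would bridge the gap with a known inequality of the form $\csr(A)\le\sr(SA)$ relating the connected stable rank of $A$ to the stable rank of its suspension $SA:=C_0(\mathbb R)\otimes A$ (this and related comparisons go back to Rieffel, \cite{Rie1983}). The suspension of a CCR algebra is again CCR, and $\Prim(SA)\cong\mathbb R\times\Prim(A)$ gives $\topdim(SA)\le\topdim(A)+1$; feeding this into the already-proven \eqref{eq:HTprp:typeI:CCR-estimates:1} applied to $SA$ yields
\[
    \csr(A)\le\sr(SA)\le\left\lfloor\frac{\topdim(SA)}{2}\right\rfloor+1\le\left\lfloor\frac{\topdim(A)+1}{2}\right\rfloor+1 .
\]
If one prefers to avoid suspensions, the same bound should follow by inserting a formula for the connected stable rank of the homogeneous building blocks $C(X)\otimes M_k$ into the composition-series argument underlying \autoref{HTprp:topdim:compSeries}, just as Brown handles $\sr$ and $\rr$. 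In either approach the only genuine work is to locate and verify the correct $\csr$-input and to check that it applies to (possibly nonseparable) CCR algebras; once that is in place, \eqref{eq:HTprp:typeI:CCR-estimates:2} falls out of the same arithmetic as \eqref{eq:HTprp:typeI:CCR-estimates:1}.
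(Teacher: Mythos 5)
Your treatment of \eqref{eq:HTprp:typeI:CCR-estimates:1} and \eqref{eq:HTprp:typeI:CCR-estimates:3} is correct and is exactly the paper's argument: dispose of the case $\topdim(A)=\infty$, then feed $\topdim(A_k)\le\topdim(A)$ into Brown's formulas from \autoref{HTprp:typeI:rr_sr_CCR} and do the arithmetic. The gap is in \eqref{eq:HTprp:typeI:CCR-estimates:2}, and it is precisely at the point you flag yourself: the bridging inequality $\csr(A)\le\sr(SA)$ for the suspension $SA=C_0(\mathbb R)\otimes A$ is not a result you can attribute to Rieffel, and you give no proof of it. Note that it is \emph{strictly stronger} than what is actually in the literature: since $SA$ is an ideal in $C([0,1])\otimes A$, one has $\sr(SA)\le\sr(C([0,1])\otimes A)$, so your inequality implies the known one, not conversely. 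The standard lifting arguments for unimodular elements (which is how such inequalities are proved) naturally require control of the stable rank of the \emph{path algebra} $C([0,1])\otimes A$, or of the connected stable rank of the relevant ideal, and give $\csr(A)\le\sr(C([0,1])\otimes A)$; getting the suspension on the nose would need a separate argument that you neither supply nor cite. Your fallback suggestion (redoing the composition-series bookkeeping with a $\csr$-formula for homogeneous building blocks) is also only a sketch: Brown's theorem gives no such formula for $\csr$, and $\csr$ does not pass through extensions as simply as $\sr$ does.

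The repair is immediate and is what the paper does: use Nistor's inequality $\csr(A)\le\sr(A\otimes C([0,1]))$ \cite[Lemma 2.4]{Nis1986} in place of the suspension. Then $A\otimes C([0,1])$ is again CCR, $\topdim(A\otimes C([0,1]))\le\topdim(A)+1$ by the tensor-product estimate in \ref{HTpgr:topdim:extension_to_all_Cs}, and your own inequality \eqref{eq:HTprp:typeI:CCR-estimates:1} applied to $A\otimes C([0,1])$ yields
\begin{align*}
    \csr(A)\le\sr(A\otimes C([0,1]))\le\left\lfloor\frac{\topdim(A)+1}{2}\right\rfloor+1,
\end{align*}
exactly the same arithmetic you carried out for the suspension. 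With that substitution your proof coincides with the paper's.
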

\begin{proof}
    If $\topdim(A)=\infty$, then the statements hold.
    So we may assume $\topdim(A)<\infty$, whence we may apply \cite[Theorem 3.10]{Bro2007}, see \autoref{HTprp:typeI:rr_sr_CCR}.

    Let us show \eqref{eq:HTprp:typeI:CCR-estimates:1}.
    If $\topdim(A)\leq 1$, then $\sr(A)=1\leq\lfloor\topdim(A)/2\rfloor+1$ .
    If $d:=\topdim(A)\geq 2$, then we use $\topdim(A_k)\leq d$ to compute:
    \begin{align*}
        \sr(A) \leq \sup_k\max\{ \left\lceil \frac{d+2k-1}{2k} \right\rceil,2\}
        \leq \max\{ \left\lceil \frac{d+1}{2} \right\rceil,2\}
        \leq \left\lfloor \frac{d}{2} \right\rfloor+1.
    \end{align*}

    Now \eqref{eq:HTprp:typeI:CCR-estimates:2} follows from \eqref{eq:HTprp:typeI:CCR-estimates:1} since $\csr(A)\leq\sr(A\otimes C([0,1]))$ in general, by \cite[Lemma 2.4]{Nis1986}, and $\topdim(A\otimes C([0,1]))\leq\topdim(A)+1$, see \ref{HTpgr:topdim:extension_to_all_Cs}.

    To show \eqref{eq:HTprp:typeI:CCR-estimates:3}, we again use \cite[Theorem 3.10]{Bro2007}, see \autoref{HTprp:typeI:rr_sr_CCR}.
    If $\topdim(A)=0$, then $\rr(A)=0\leq\topdim(A)$ .
    If $d:=\topdim(A)\geq 1$, then  we use $\topdim(A_k)\leq d$ to compute:
    \begin{align*}
        \rr(A)
        \leq \sup_k\max\{\left\lceil \frac{d}{2k-1} \right\rceil,1\}
        \leq \max\{\left\lceil d \right\rceil,1\}
        \leq d,
    \end{align*}
    which completes the proof.
\end{proof}

%------------------------------------------------------------------------------------------
\begin{rmk}
\label{HTrmk:typeI:transfinite_induction}
    What makes type I $C^*$-algebras so accessible is the presence of composition series with successive quotients that are easier to handle (i.e., of continuous trace or CCR), see \ref{HTpgr:prelim:typeI}.
    They allow us to prove statements by transfinite induction, for which one has to consider the case of a successor and limit ordinal.
    Let us see that for statements about dimension theories one only needs to consider successor ordinals.

    Let $(J_\alpha)_{\alpha\leq\mu}$ be a composition series, and $d$ a dimension theory.
    If $\alpha$ is a limit ordinal, then $J_\alpha=\overline{\bigcup_{\gamma<\alpha}J_\gamma}$, and we obtain:
    \begin{align*}
        d(J_\alpha)
        \leq_{(D5)}\; \sup_{\gamma<\alpha}d(J_\gamma)
        \leq_{(D1)}\; \sup_{\gamma<\alpha}d(J_\alpha),
    \end{align*}
    and thus $d(J_\alpha)=\sup_{\gamma<\alpha}d(J_\gamma)$.

    Thus, any reasonable estimate about dimension theories that holds for $\gamma<\alpha$ will also hold for $\alpha$.
    It follows that we only need to consider a successor ordinal $\alpha$, in which case $A=J_\alpha$ is an extension of $B=J_\alpha/J_{\alpha-1}$ by $I=J_{\alpha-1}$.
    By assumption the result is true for $I$ and has to be proved for $A$ (using that $B$ has continuous trace or is CCR).
    This idea is used to prove the next theorem.
\end{rmk}

%------------------------------------------------------------------------------------------
\begin{thm}
\label{HTprp:typeI:GCR-estimates}
    Let $A$ be a type I $C^*$-algebra.
    Then:
    \begin{align}
        \label{eq:HTprp:typeI:GCR-estimates:1}
        \sr(A)      &\leq\left\lfloor\frac{\topdim(A)+1}{2}\right\rfloor+1, \\
        \label{eq:HTprp:typeI:GCR-estimates:3}
        \rr(A)      &\leq\topdim(A)+2.
    \end{align}
\end{thm}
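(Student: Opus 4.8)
The plan is to establish the stable rank bound \eqref{eq:HTprp:typeI:GCR-estimates:1} first, and then to deduce the real rank bound \eqref{eq:HTprp:typeI:GCR-estimates:3} from it via the general inequality $\rr(A)\leq 2\sr(A)-1$. For \eqref{eq:HTprp:typeI:GCR-estimates:1} I would argue by transfinite induction along a composition series $(J_\alpha)_{\alpha\leq\mu}$ of $A$ with CCR (in fact continuous trace) successive quotients. Since the stable rank is a dimension theory for all $C^*$-algebras (\autoref{HTprp:NCDimThy:axiomatic_dimThy}), \autoref{HTrmk:typeI:transfinite_induction} reduces the induction to the successor step: one is given an extension $0\to I\to A\to B\to 0$ with $B$ a CCR algebra and $I$ of type I, the bound \eqref{eq:HTprp:typeI:GCR-estimates:1} is known for $I$, and one must verify it for $A$.

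For this step, recall that the topological dimension is a dimension theory for type I algebras (\autoref{HTprp:topdim:dimThy}), so axioms (D1) and (D2) give $\topdim(I)\leq\topdim(A)$ and $\topdim(B)\leq\topdim(A)$. The induction hypothesis then yields $\sr(I)\leq\lfloor(\topdim(I)+1)/2\rfloor+1\leq\lfloor(\topdim(A)+1)/2\rfloor+1$, while \autoref{HTprp:typeI:CCR-estimates} applied to the CCR algebra $B$ gives, using $\topdim(B)\leq\topdim(A)$,
\begin{align*}
\sr(B)&\leq\left\lfloor\frac{\topdim(A)}{2}\right\rfloor+1\leq\left\lfloor\frac{\topdim(A)+1}{2}\right\rfloor+1, \\
\csr(B)&\leq\left\lfloor\frac{\topdim(A)+1}{2}\right\rfloor+1.
\end{align*}
I would then invoke the standard estimate for the stable rank of an extension, $\sr(A)\leq\max\{\sr(I),\sr(B),\csr(B)\}$ (Rieffel, \cite{Rie1983}), to conclude $\sr(A)\leq\lfloor(\topdim(A)+1)/2\rfloor+1$, which closes the induction.

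Granting \eqref{eq:HTprp:typeI:GCR-estimates:1}, the real rank bound follows at once:
\begin{align*}
\rr(A)\leq 2\sr(A)-1\leq 2\left\lfloor\frac{\topdim(A)+1}{2}\right\rfloor+1\leq\bigl(\topdim(A)+1\bigr)+1=\topdim(A)+2.
\end{align*}
Alternatively one could run the transfinite induction directly for the real rank, using the CCR bound $\rr(B)\leq\topdim(B)$ of \eqref{eq:HTprp:typeI:CCR-estimates:3} together with a non-accumulating extension estimate for $\rr$; but the route through the stable rank is cleaner and explains why \eqref{eq:HTprp:typeI:GCR-estimates:3} is weaker than the CCR estimate \eqref{eq:HTprp:typeI:CCR-estimates:3}.

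The crux is the bookkeeping in the successor step: the extension estimate has to be chosen so that the bound does not grow with the (possibly transfinite) length of the composition series. This is exactly why the connected stable rank appears --- it is the invariant that controls $\sr$ of an extension with no additive loss, and \autoref{HTprp:typeI:CCR-estimates} bounds $\csr(B)$ for the CCR quotient $B$ by the same quantity that bounds $\sr(A)$. In particular $\csr$ is only ever evaluated on the CCR quotients, never on $I$ or on $A$ itself, so nothing accumulates. The limit-ordinal case, normally the delicate part of a transfinite induction, costs nothing here since \autoref{HTrmk:typeI:transfinite_induction} disposes of it using only axioms (D1) and (D5) for the stable rank.
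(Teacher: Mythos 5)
Your proposal is correct and follows essentially the same route as the paper: transfinite induction over a composition series with CCR quotients, Rieffel's extension estimate $\sr(A)\leq\max\{\sr(I),\sr(A/I),\csr(A/I)\}$ combined with \autoref{HTprp:typeI:CCR-estimates} in the successor step, the dimension-theory axioms (via \autoref{HTrmk:typeI:transfinite_induction}) for limit ordinals, and $\rr(A)\leq 2\sr(A)-1$ to deduce the real rank bound. No gaps.
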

\begin{proof}
    We will prove \eqref{eq:HTprp:typeI:GCR-estimates:1} by transfinite induction over the length $\mu$ of a composition series $(J_\alpha)_{\alpha\leq\mu}$ for $A$ with successive quotients that are CCR algebras.

    Set $d:=\topdim(A)$.
    Assume the statement holds for some ordinal $\mu$, and let us show it also holds for $\mu+1$.
    Consider the ideal $I:=J_\mu$ inside $A=J_{\mu+1}$.
    We obtain the following, where the first estimate follows from \cite[Theorem 4.11]{Rie1983}, and the second estimate follows by assumption of the induction for $I$ and \autoref{HTprp:typeI:CCR-estimates} for the CCR algebra $A/I$:
    \begin{align*}
        \sr(A)
        &\leq \max\{\sr(I),\sr(A/I),\csr(A/I)\} \\
        &\leq \max\{ \left\lfloor\frac{d+1}{2}\right\rfloor+1, \left\lfloor\frac{d}{2}\right\rfloor+1, \left\lfloor\frac{d+1}{2}\right\rfloor+1\} \\
        &=\left\lfloor\frac{d+1}{2}\right\rfloor+1.
    \end{align*}

    Let $\mu$ be a limit ordinal, and assume the statement holds for $\alpha<\mu$.
    This means that $\sr(J_\alpha)\leq\left\lfloor\frac{\topdim(J_\alpha)+1}{2}\right\rfloor+1$ for all $\alpha<\mu$.
    As explained in \autoref{HTrmk:typeI:transfinite_induction}, we obtain the desired estimate for $\mu$ as follows:
    \begin{align*}
        \sr(J_\mu)
        = \sup_{\alpha<\mu} \sr(J_\alpha)
        \leq \sup_{\alpha<\mu} \left\lfloor\frac{\topdim(J_\alpha)+1}{2}\right\rfloor+1
        = \left\lfloor\frac{\topdim(J_\mu)+1}{2}\right\rfloor+1.
    \end{align*}

    Finally, \eqref{eq:HTprp:typeI:GCR-estimates:3} follows from \eqref{eq:HTprp:typeI:GCR-estimates:1}, using the estimate $\rr(A)\leq 2\sr(A)-1$, which holds for all $C^*$-algebras, see \cite[Proposition 1.2]{BroPed1991}.
\end{proof}

%------------------------------------------------------------------------------------------
\begin{rmk}
\label{HTrmk:typeI:rr_sr_GCR-estimate_sharp}
%    Let $A$ be a $C^*$-algebra.
    It follows from \cite[Proposition 1.7]{Rie1983}, \autoref{HTprp:typeI:CCR-estimates}, and \autoref{HTprp:typeI:GCR-estimates} that we may estimate the stable rank of a $C^*$-algebra $A$ in terms of its topological dimension as follows:
    \begin{enumerate}[(1)   ]
        \item
        $\sr(A)=\left\lfloor\frac{\topdim(A)}{2}\right\rfloor+1$, \quad if $A$ is commutative.
        \item
        $\sr(A)\leq\left\lfloor\frac{\topdim(A)}{2}\right\rfloor+1$, \quad if $A$ is CCR.
        \item
        $\sr(A)\leq\left\lfloor\frac{\topdim(A)+1}{2}\right\rfloor+1$, \quad if $A$ is type I.
    \end{enumerate}

    This also shows that the inequality for the stable rank in \autoref{HTprp:typeI:CCR-estimates} cannot be improved (the same is true for the estimates of real rank and connected stable rank).

    To see that the estimate of \autoref{HTprp:typeI:GCR-estimates} for the stable rank cannot be improved either, consider the Toeplitz algebra $\mathcal{T}$.
    We have $\sr(\mathcal{T})=2$, while $\topdim(\mathcal{T})=1$.
\end{rmk}

%##########################################################################################
%##########################################################################################
%##########################################################################################
\section*{Acknowledgments}

\noindent
I thank S{\o}ren Knudby for valuable comments and feedback.
I thank Mikael R{\o}rdam for interesting discussions and valuable comments, especially on the results in Section~3.
I also thank Wilhelm Winter for inspiring discussions on noncommutative dimension theory.

%##########################################################################################
%\bibliographystyle{amsalpha}
%\bibliography{References}

\providecommand{\bysame}{\leavevmode\hbox to3em{\hrulefill}\thinspace}

\end{document}